\newtheorem{prop}{Proposition}[section]
\newtheorem{theo}[prop]{Theorem}
\newtheorem{lem}[prop]{Lemma}
\newtheorem{cor}[prop]{Corollary}
\newtheorem{defi}[prop]{Definition}
\newcommand{\be}{\begin{equation}}
\newcommand{\ee}{\end{equation}}
\newcommand{\bes}{\begin{eqnarray}}
\newcommand{\ees}{\end{eqnarray}}
\newcommand{\bess}{\begin{eqnarray*}}
\newcommand{\eess}{\end{eqnarray*}}
\begin{document}
\setlength{\baselineskip}{16pt}
\pagestyle{myheadings}

\title[Representations of  Quantum Double]{}
\author[J.C. Dong, H.X. Chen ]{}
\date{}
\begin{center}
\textbf{\large{The Representations of  Quantum Double of Dihedral
Groups}\footnote[1]{The work is supported by NSF of China
(10771183), and also supported by Doctorate Foundation
(200811170001), Ministry of Education of China;
 Agricultural Machinery Bureau Foundation of Jiangsu Province(GXZ08001).}}\\
\vskip5pt
 \textbf{Jingcheng Dong}\\\textit{College of Engineering, Nanjing
Agricultural University\\ Nanjing Jiangsu, 210031, China\\Department
of Mathematics, Yangzhou University\\ Yangzhou
Jiangsu, 225002, China }\\\textit{E-mail: dongjc@njau.edu.cn}\\
\textbf{Huixiang Chen\footnote[2]{Corresponding author.}}\\
\textit{Department of Mathematics, Yangzhou University\\ Yangzhou
Jiangsu, 225002, China\\E-mail: hxchen@yzu.edu.cn}\\
\vskip5pt Received 22 March 2009\\
Revised 22 September 2009\\
\vskip5pt Communicated by Quanshui Wu
\end{center}

\maketitle

\textbf{Abstract.}\quad Let $k$ be an algebraically closed field of
odd characteristic $p$, and let $D_n$ be the Dihedral group of order
$2n$ such that $p\mid 2n$. Let $D(kD_n)$ denote the quantum double
of the group algebra $kD_n$. In this paper, we describe the
structures of all finite dimensional indecomposable left
$D(kD_n)$-modules, equivalently, of all finite dimensional
indecomposable Yetter-Drinfeld $kD_n$-modules, and classify them.

\textbf{2000 Mathematics Subject Classification:}\quad 16W30,20C20.

\textbf{Key words:}\quad Quantum Double, Dihedral Group,
Representation, Yetter-Drinfeld module.

\section{Introduction}
Quantum groups arose from the study of quantum Yang-Baxter
equations. Quantum groups are defined in terms of what Drinfeld
calls ``quasitriangular Hopf algebra"  and their construction is
based on a general procedure also due to Drinfeld \cite{drinfeld}
assigning to a finite dimensional Hopf algebra $H$ a quasitriangular
Hopf algebra $D(H)$. The Hopf algebra $D(H)$ is called the quantum
double of $H$, or the Drinfeld double of $H$. It has brought
remarkable applications to new aspects of representation theory,
noncommutative geometry, low-dimensional topology and so on.

The quantum double $D(kG)$ of a finite dimensional group algebra
$kG$ has attracted many mathematicians' interest recently. Let $k$
be an algebraically closed field of positive characteristic $p$. If
the order of $G$ is divisible by $p$, then $D(kG)$ is not semisimple
by an analogue of Maschke's theorem (see \cite{witherspoon}). In
this case, we need to find all indecomposable $D(kG)$-modules in
order to describe the structure of the representation ring (or Green
ring) of $D(kG)$.

In this paper, we investigate the representations of $D(kD_n)$ for
the Dihedral group $D_n$. In section 2, we recall some basic results
on representation theory of finite groups and construct
Yetter-Drinfeld modules over group algebras, and make some
preparations for the rest of this paper.

In section 3, we discuss the indecomposable representations of some finite
groups. Suppose that $K_4$ is the Klein four group. Then
$kK_4$ is semisimple if and only if the characteristic of $k$ is not equal to $2$.
In this case, $kK_4$ has only 4 simple
modules up to isomorphism. Let $k$ be an algebraically closed field of odd
characteristic $p$. Suppose that $C_n$ is a cyclic group of order
$n=p^st$ with $(p, t)=1$. Then $kC_n$ has exactly $n$ indecomposable
modules with dimension $1, 2, \cdots, p^s$, respectively. Moreover,
there are $t$ indecomposable modules of each dimension. These
modules are obtained as indecomposable summands of the modules
induced from modules over a Sylow $p$-subgroup of $C_n$. Suppose that
$D_n$ is the Dihedral group of order $2n$ with $n=p^st$ as above. If $n$
is odd, then $kD_n$ has exactly $\frac{t+3}{2}p^s$ indecomposable
modules. If $n=p^st$ is even, then $kD_n$ has exactly
$\frac{t+6}{2}p^s$ indecomposable modules. These modules are
obtained as indecomposable summands of the modules induced from
those over a normal subgroup.

We state the main results in section 4. Following the
characterization of Yetter-Drinfeld $kG$-modules in
\cite{witherspoon, mason},  we construct all finite dimensional indecomposable
Yetter-Drinfeld $kD_n$-modules, where $D_n$ is the Dihedral group of order $2n$.
These modules are induced from the indecomposable $kC_{D_n}(g)$-modules
for some $g\in D_n$.

\section{Preliminaries}
Throughout this paper, we work over an algebraically closed field of odd
characteristic $p$. Unless otherwise stated, all modules are left modules, all comodules are right
comodules, and moreover they are finite dimensional over $k$.
$\otimes$ means $\otimes_k$.

There is an important way of constructing modules over a
group from modules over its subgroups, which was originally
described by Frobenius in \cite{frobenius}. Let $H$ be a subgroup of
a finite group $G$. If $N$ is a $kH$-module, then we have an induced
$kG$-module $N\!\!\uparrow^G=kG\otimes_{kH}N$. Since $kG$ is free as
a right $kH$-module, of rank $[G:H]$, one can write
$N\!\!\uparrow^G=\bigoplus_{g_i}g_i\otimes N$ as a sum of $k$-vector spaces,
where the sum runs over a set of left coset representatives of $H$
in $G$. The action of $G$ is given by $g(g_i\otimes n)=gg_i\otimes
n=g_j\otimes hn$, where $g_j$ is the left coset representative such
that $gg_i=g_jh$ with $h\in H$. The representation of $G$ corresponding
to the $kG$-module $N\!\!\uparrow^G$ is called an induced representation. In subsequent, we will
use the following terminologies of representations and modules.

Let $\{g_1, g_2, \cdots, g_t\}$ be a set of left coset
representatives of $H$ in $G$, and $\{v_1, \cdots, v_r\}$ be a
$k$-basis of $kH$-module $N$. Let $\rho$ be the matrix
representation of $H$ corresponding to $N$ with respect to the given
basis. Then, with respect to the $k$-basis $\{g_i\otimes v_j|1\leq
i\leq t, 1\leq j\leq r\}$, the induced matrix representation
$\Omega$ corresponding to $N\!\!\uparrow^G$ can be described as
follows
$$\Omega(g)=(\rho(g_j^{-1}gg_i))_{t\times t},$$ where $\rho$ is extended to $G$ by setting
$\rho(x)=0$ for $x\in G\backslash H$. Thus $\Omega(g)$ is
partitioned into a $t\times t$ array of $r\times r$ blocks.

\begin{theo}\cite{higman}\label{higman}
Let $kG$ be the group algebra of a finite group $G$. Then $kG$ is of
finite representation type if and only if $G$ has cyclic Sylow
$p$-subgroups.
\end{theo}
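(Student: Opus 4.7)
The plan is to attack both directions through the vertex–source machinery of modular representation theory. Write $|G|=p^s m$ with $(p,m)=1$ and fix a Sylow $p$-subgroup $P$ of $G$.

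\textbf{Sufficiency.} Assume $P$ is cyclic. Every indecomposable $kG$-module $M$ has a vertex $Q$, a $p$-subgroup of $G$ (unique up to $G$-conjugation) minimal with the property that $M$ is a direct summand of some induced module $L\!\!\uparrow^G$ for $L$ a $kQ$-module. Since every $p$-subgroup of $G$ is subconjugate to $P$ and subgroups of cyclic groups are cyclic, $Q$ is cyclic of order $p^i$ for some $0\leq i\leq s$. A source of $M$ over $Q$ is an indecomposable $kQ$-module $L$ with $M$ a summand of $L\!\!\uparrow^G$; by the classification of indecomposables over a cyclic $p$-group recalled in the introduction, there are only $|Q|=p^i$ isomorphism classes of such $L$. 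Each $L\!\!\uparrow^G$ has only finitely many indecomposable summands, so there are only finitely many indecomposable $kG$-modules with any given vertex $Q$. Summing over the finitely many $G$-conjugacy classes of cyclic $p$-subgroups yields that $kG$ has finite representation type.

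\textbf{Necessity.} Assume $P$ is non-cyclic. Standard $p$-group theory (e.g.\ passing to the socle of the center of $P$) produces an elementary abelian subgroup $E\cong C_p\times C_p$ inside $P$. The key step is to show directly that $kE$ has infinite representation type. Identifying $kE$ with $k[x,y]/(x^p,y^p)$, I would construct, for each $n\geq 1$ and each $\lambda\in k$, a $2n$-dimensional module realized by a specified pair of commuting nilpotent matrices $X,Y$ on $k^{2n}$ whose joint Jordan data depend genuinely on $\lambda$, and certify pairwise non-isomorphism by invariants such as ranks of monomials $X^a Y^b$ or the Jordan blocks of $X+\lambda Y$. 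Once $kE$ is shown to have infinitely many indecomposables, one propagates infinite type up to $kG$: for each indecomposable $kE$-module $M$, the Mackey decomposition gives $M$ as a summand of $(M\!\!\uparrow^G)\!\!\downarrow_E$ (the trivial double coset contributes exactly $M$), so there exists an indecomposable summand $N_M$ of $M\!\!\uparrow^G$ whose restriction to $E$ contains $M$. By Krull--Schmidt, the restriction of any fixed indecomposable $N$ decomposes into finitely many indecomposable summands, so the assignment $M\mapsto N_M$ is finite-to-one; hence infinitely many pairwise non-isomorphic $M$ yield infinitely many pairwise non-isomorphic $N_M$, and $kG$ is of infinite representation type.

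The main obstacle is the explicit infinite representation type of $k(C_p\times C_p)$: the vertex--source and Mackey inputs can be cited as black-box facts from modular representation theory, but producing a bona fide infinite family of rank-two indecomposables and certifying their pairwise non-isomorphism demands a genuine calculation with commuting nilpotent pairs, and this is historically the technical heart of Higman's theorem. A smaller secondary point is the Sylow-type assertion that every $p$-subgroup of $G$ is subconjugate to $P$, which is what ensures the vertices appearing in the sufficiency argument are all conjugate into the fixed cyclic Sylow.
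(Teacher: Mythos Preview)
The paper does not prove this theorem at all: it is stated with the citation \cite{higman} and immediately followed by the next cited result, with no proof environment in between. It is used purely as a black box (to deduce that $kC_n$ and $kD_n$ are of finite representation type). So there is no proof in the paper to compare your proposal against.

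That said, your outline is the standard modern argument and is essentially sound in the paper's setting of odd characteristic~$p$. Two minor comments. For sufficiency, you do not need the full vertex--source apparatus: since $[G:P]$ is prime to $p$, every indecomposable $kG$-module $M$ is relatively $P$-projective, hence a summand of $(M\!\!\downarrow_P)\!\!\uparrow^G$, and therefore a summand of $L\!\!\uparrow^G$ for one of the $p^s$ indecomposable $kP$-modules $L$; Krull--Schmidt finishes. For necessity, your parenthetical ``passing to the socle of the center of $P$'' is not a reliable way to locate $C_p\times C_p$: the center of a non-cyclic $p$-group can be cyclic (extraspecial groups, for instance). The correct input is the classical fact that a finite $p$-group with a unique subgroup of order $p$ is cyclic when $p$ is odd, and cyclic or generalized quaternion when $p=2$. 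Since the paper's standing hypothesis is that $p$ is odd, the quaternion obstruction does not arise and your reduction to $k(C_p\times C_p)$ goes through once the subgroup is produced by the right lemma.
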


Suppose that a Sylow $p$-subgroup $P$ of $G$ is cyclic. Then we have
the following theorem \cite{berman} for the number of non-isomorphic
indecomposable modules over $kG$.

\begin{theo}\cite{berman}\label{berman}
Suppose that a Sylow $p$-subgroup $P$ of  a finite group $G$ is cyclic. If $P\lhd
G$, then the number of non-isomorphic indecomposable modules of $kG$
is equal to $|P|r$, where $r$ is the number of $p$-regular conjugate
classes of $G$.
\end{theo}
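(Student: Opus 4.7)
The plan is to exhibit $kG$ as a Nakayama (generalized uniserial) algebra and then count its indecomposable modules by the standard classification for such algebras.

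First I would invoke Schur--Zassenhaus: since $P\lhd G$ is a Sylow $p$-subgroup, $[G:P]$ is prime to $p$, so $P$ admits a complement $H$ with $G=P\rtimes H$ and $|H|$ coprime to $p$. By Maschke, $kH$ is semisimple, with exactly $r$ non-isomorphic simple modules $S_1,\ldots,S_r$, since the conjugacy classes of $H\cong G/P$ are in bijection with the $p$-regular conjugacy classes of $G$. Inflating along $G\twoheadrightarrow H$ produces the complete list of simple $kG$-modules. Writing $P=\langle c\rangle$, the conjugation action of $H$ preserves $J(kP)=(c-1)kP$, so $I:=J(kP)\cdot kG=kG\cdot J(kP)$ is a two-sided nilpotent ideal, and $kG/I\cong kH$ is semisimple; hence $J(kG)=I$ and $J(kG)^{|P|}=0$.

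Next I would analyze the indecomposable projective cover of each simple. Since $kG$ is free as a right $kH$-module, $P_{S_i}:=kG\otimes_{kH}S_i$ is projective with top $S_i$, and as a left $kP$-module it is $kP\otimes_k S_i$, a free $kP$-module of rank $\dim S_i$. The filtration $J(kG)^m P_{S_i}=J(kP)^m P_{S_i}$ therefore has layers of dimension $\dim S_i$. The technical heart of the argument, which I expect to be the main obstacle, is showing that each such layer is a \emph{single} simple $kG$-module rather than merely a semisimple one. For this, let $\phi\colon H\to(\mathbb{Z}/p^s\mathbb{Z})^\times$ be determined by $hch^{-1}=c^{\phi(h)}$ and set $\chi(h):=\phi(h)\bmod p\in k^\times$; a direct binomial expansion of $(c^{\phi(h)}-1)^m$ shows that $H$ acts on the line $J(kP)^m/J(kP)^{m+1}=k\cdot(c-1)^m$ by the character $\chi^m$. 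Consequently $J(kG)^m P_{S_i}/J(kG)^{m+1}P_{S_i}\cong S_i\otimes k_{\chi^m}$ is a simple $kG$-module, so $P_{S_i}$ is uniserial of Loewy length $|P|$, and $kG$ is Nakayama.

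Finally, for any Nakayama algebra the complete list of indecomposable modules is $\{P_{S_i}/J(kG)^m P_{S_i}:1\leq i\leq r,\ 1\leq m\leq|P|\}$, and these modules are pairwise non-isomorphic since they have either different tops or different composition lengths. This yields exactly $|P|\cdot r$ indecomposable $kG$-modules, proving the theorem.
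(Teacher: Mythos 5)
Your argument is essentially correct, and it cannot be measured against an in-paper proof because the paper does not prove this statement at all: Theorem \ref{berman} is quoted from Berman's 1960 paper and used as a black box to count indecomposables for $C_n$ and $D_n$. Your route — Schur--Zassenhaus to write $G=P\rtimes H$, the identification $J(kG)=J(kP)\,kG$ with semisimple quotient $kH$, the computation that $H$ acts on $J(kP)^m/J(kP)^{m+1}=k\,(c-1)^m$ through the character $\chi^m$, hence that each $P_{S_i}=kG\otimes_{kH}S_i$ is uniserial of Loewy length $|P|$, and then the Nakayama classification of indecomposables as $P_{S_i}/J^mP_{S_i}$ — is the standard structural proof that $kG$ is a serial algebra when the Sylow $p$-subgroup is cyclic and normal, and it actually yields more than the bare count: it describes all indecomposables as the Loewy quotients of the PIMs, which is consistent with (and explains) the explicit uniserial modules the paper later constructs for $kC_n$ and $kD_n$. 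Two small points deserve a sentence each in a polished write-up: (i) the asserted bijection between conjugacy classes of $H\cong G/P$ and $p$-regular classes of $G$ is true but not automatic; either quote Brauer's theorem (which the paper itself states later) together with the fact that the normal $p$-subgroup $P$ acts trivially on every simple $kG$-module, or argue directly that for $g=g_pg_{p'}$ one has $g_p\in P$ since $P$ is the unique Sylow $p$-subgroup, so each coset of $P$ contains a $p$-regular element; (ii) ``Nakayama'' formally requires the indecomposable \emph{right} projectives to be uniserial as well, which here is immediate either by running the same computation for right modules or by the anti-automorphism $g\mapsto g^{-1}$ of $kG$. Neither point is a genuine gap.
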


Let $x\in G$ and $H$ be a subgroup of $G$. Then ${}^x\!H=xHx^{-1}$
is also a subgroup of $G$. For any $kH$-module $V$, there is a
$k[{}^x\!H]$-module ${}^xV=x\otimes V=\{x\otimes v|v\in V\}$
with the action given by
${}^x\!y(x\otimes v)=x\otimes yv$ for all $y\in H$, where
${}^x\!y=xyx^{-1}$.

\begin{theo}[Mackey Decomposition Theorem]\label{Mac}
Let $H, K$ be subgroups of a finite group $G$ and $W$ be a $kK$-module.  Then
$$W\!\!\uparrow^G\!\!\downarrow_H\cong\bigoplus_{HgK}({}^gW)
\!\!\downarrow_{H\cap{}^gK}\uparrow^H, $$ where the sum runs over
the  double cosets  of $H$ and $K$ in $G$.
\end{theo}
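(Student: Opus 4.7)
The plan is to compute $W\!\!\uparrow^G\!\!\downarrow_H$ by decomposing $G$ into $(H,K)$-double cosets. Writing $G=\bigsqcup_{HgK}HgK$ and noting that each piece $k[HgK]$ is an $(kH,kK)$-sub-bimodule of $kG$, we obtain
$$W\!\!\uparrow^G=kG\otimes_{kK}W=\bigoplus_{HgK}k[HgK]\otimes_{kK}W$$
as left $kH$-modules. Hence it suffices to exhibit, for each double coset representative $g$, an isomorphism
$$\Phi_g:({}^gW)\!\!\downarrow_{H\cap{}^gK}\!\!\uparrow^H\longrightarrow k[HgK]\otimes_{kK}W$$
of left $kH$-modules.

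I would define $\Phi_g$ by sending a generator $h\otimes(g\otimes w)$ of $kH\otimes_{k[H\cap{}^gK]}{}^gW$ to $hg\otimes w$ in $k[HgK]\otimes_{kK}W$, and extending $kH$-linearly. The map is clearly $kH$-linear on the left, so the substantive check is well-definedness across $\otimes_{k[H\cap{}^gK]}$. For $y\in H\cap{}^gK$ one has $g^{-1}yg\in K$, and using the definition ${}^g\!(g^{-1}yg)(g\otimes w)=g\otimes(g^{-1}yg)(w)$ from the excerpt, both $hy\otimes(g\otimes w)$ and $h\otimes(y(g\otimes w))=h\otimes(g\otimes(g^{-1}yg)w)$ map to $hyg\otimes w=hg(g^{-1}yg)\otimes w=hg\otimes(g^{-1}yg)w$ in $k[HgK]\otimes_{kK}W$, so $\Phi_g$ is well-defined.

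For bijectivity I would check that $\Phi_g$ carries a $k$-basis to a $k$-basis. Let $\{h_i\}$ be a set of left coset representatives for $H\cap{}^gK$ in $H$ and $\{w_j\}$ a $k$-basis of $W$. A standard computation shows $h_1(H\cap{}^gK)=h_2(H\cap{}^gK)$ iff $h_2^{-1}h_1\in gKg^{-1}\cap H$ iff $h_1gK=h_2gK$, giving the disjoint decomposition $HgK=\bigsqcup_i h_igK$; consequently $\{h_ig\otimes w_j\}$ is a $k$-basis of $k[HgK]\otimes_{kK}W$, whereas $\{h_i\otimes(g\otimes w_j)\}$ is a $k$-basis of the induced module on the source side. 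Since $\Phi_g$ maps the latter bijectively to the former, it is a $kH$-module isomorphism, and summing over double cosets yields the theorem. The main obstacle is the bookkeeping in the well-definedness step, which must carefully balance the two tensor product relations $\otimes_{k[H\cap{}^gK]}$ and $\otimes_{kK}$ against the twisted action defining ${}^gW$; once this is pinned down, the remainder reduces to basis counting.
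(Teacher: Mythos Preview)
Your proof is correct and follows the standard textbook argument for Mackey's decomposition theorem: decompose $kG$ as a $(kH,kK)$-bimodule along the double coset partition, then identify each summand $k[HgK]\otimes_{kK}W$ with the induced module $({}^gW)\!\!\downarrow_{H\cap{}^gK}\!\!\uparrow^H$ via the explicit map $h\otimes(g\otimes w)\mapsto hg\otimes w$. The well-definedness check and the basis-counting bijectivity argument are both carried out correctly.

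There is nothing to compare against, however: the paper states the Mackey Decomposition Theorem in the preliminaries as a classical background result and does not supply a proof. So your argument is not an alternative to the paper's---it simply fills in a proof the paper omits, and it does so in the usual way.
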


Suppose that $H\lhd G$ and $V$ is a $kH$-module.  The inertia group
$T_H(V)=T(V)$ of $V$ is defined by $T(V)=\{x\in G|{}^xV\cong V\}$.
Clearly,  $T(V)$ is a subgroup of $G$ and $H\subseteq T(V)$.

\begin{theo}\cite{Feit}\label{T(W)}
Suppose that $G$ is a finite group, $H\lhd G$ and $W$ is an indecomposable $kH$-module such
that $T(W)=H$. Then $W\!\!\uparrow^G$ is indecomposable.
\end{theo}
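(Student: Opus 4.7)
The plan is to restrict $M := W\!\!\uparrow^G$ to $H$ via Mackey's formula, identify its indecomposable $kH$-summands precisely, and then show that any $kG$-module decomposition $M = M_1 \oplus M_2$ must be trivial by tracking how the $G$-action on $M$ permutes the Krull--Schmidt isotypes of $M\!\!\downarrow_H$.

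First I would apply Theorem~\ref{Mac} with $K = H$. Since $H \lhd G$, each double coset $HgH$ collapses to a single left coset $gH$ and $H \cap {}^gH = H$, so
$$ M\!\!\downarrow_H \;\cong\; \bigoplus_{gH \in G/H} {}^gW $$
as $kH$-modules. Each ${}^gW$ is indecomposable, and the hypothesis $T(W) = H$ is equivalent to ${}^{g_1}W \cong {}^{g_2}W \Longleftrightarrow g_1H = g_2H$, so these $[G:H]$ summands are pairwise non-isomorphic.

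Now suppose for contradiction that $M = M_1 \oplus M_2$ as $kG$-modules with both $M_i$ nonzero. For each indecomposable $kH$-module $V$ let $\mu_i(V)$ denote its multiplicity in a Krull--Schmidt decomposition of $M_i\!\!\downarrow_H$. Then $\mu_i$ vanishes off the orbit $\{{}^gW : gH \in G/H\}$ and satisfies $\mu_1({}^gW) + \mu_2({}^gW) = 1$ for every $g$. The key step is to prove $\mu_i({}^gW)$ is independent of $g$. Fix $x \in G$; since $M_i$ is $G$-stable, the map $\phi_x : M_i \to M_i$, $m \mapsto xm$, is a $k$-linear automorphism. For any $kH$-summand $N \subseteq M_i$, the image $xN$ is again a $kH$-submodule (using $H \lhd G$: $h(xn) = x(x^{-1}hx)n \in xN$) and a $kH$-direct summand (apply $\phi_x$ to a $kH$-complement). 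A direct comparison with the paper's definition of twisting identifies $xN \cong {}^xN$ and ${}^x({}^{g_j}W) \cong {}^{xg_j}W$; thus $\phi_x$ carries one Krull--Schmidt decomposition of $M_i\!\!\downarrow_H$ to another, and uniqueness of multiplicities forces $\mu_i({}^{g_j}W) = \mu_i({}^{xg_j}W)$. Because $G$ acts transitively on $G/H$, each $\mu_i({}^gW)$ is constant in $g$, hence identically $0$ or $1$; combined with $\mu_1 + \mu_2 = 1$, one of $M_1\!\!\downarrow_H$ and $M_2\!\!\downarrow_H$ must vanish, contradicting $M_i \neq 0$.

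The main obstacle is the isotype-tracking step: verifying that $\phi_x$ sends a $kH$-summand of $M_i$ to a $kH$-summand with the correctly twisted isotype. Normality of $H$ enters essentially here, both in keeping $xN$ stable under $H$ and in making ${}^gW$ a genuine $kH$-module. Everything else is a routine combination of Mackey's formula, Krull--Schmidt uniqueness, and the transitivity of the $G$-action on $G/H$.
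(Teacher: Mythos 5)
Your argument is correct. The paper itself gives no proof of this theorem -- it is quoted from Feit's book -- so there is nothing internal to compare against; but your route is the standard Clifford-theoretic one and all the key points are in place: with $H\lhd G$ the Mackey formula collapses to $W\!\!\uparrow^G\!\!\downarrow_H\cong\bigoplus_{gH\in G/H}{}^{g}W$, the condition $T(W)=H$ makes these $[G:H]$ indecomposable summands pairwise non-isomorphic (since ${}^{g_1}W\cong{}^{g_2}W$ iff $g_2^{-1}g_1\in T(W)$), and for a $kG$-decomposition $M_1\oplus M_2$ the map $m\mapsto xm$ sends a $kH$-summand $N$ of $M_i$ to a $kH$-summand isomorphic to ${}^{x}N$, so by Krull--Schmidt the multiplicity function of $M_i\!\!\downarrow_H$ is constant on the orbit $\{{}^{g}W\}$; together with $\mu_1+\mu_2=1$ this forces some $M_i\!\!\downarrow_H=0$, hence $M_i=0$. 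One small alternative worth knowing: the same hypotheses give a quicker proof via the adjunction isomorphism $\mathrm{End}_{kG}(W\!\!\uparrow^G)\cong\mathrm{Hom}_{kH}(W,W\!\!\uparrow^G\!\!\downarrow_H)\cong\bigoplus_{gH}\mathrm{Hom}_{kH}(W,{}^{g}W)=\mathrm{End}_{kH}(W)$, which is local because $W$ is finite dimensional and indecomposable, so $W\!\!\uparrow^G$ is indecomposable; your multiplicity-tracking argument is longer but more elementary, avoiding the identification of the endomorphism algebra.
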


\begin{theo}\cite{conlon}\label{conlon}
Suppose that a finite group $G$ is a direct product of two normal
subgroups $G_1$ and $G_2$. Then every indecomposable $kG$-modules is
of the form $V_1\otimes V_2$ for two indecomposable $kG_i$-modules
$V_i$, $i=1, 2$ if and only if at least one of the orders $|G_1|$ and
$|G_2|$ is not divisible by $p$.
\end{theo}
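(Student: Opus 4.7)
The plan is to prove both implications separately, exploiting that $G = G_1\times G_2$ yields $kG\cong kG_1\otimes kG_2$ as algebras, so that a $kG$-module is the same datum as a vector space equipped with commuting $kG_1$- and $kG_2$-actions.

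For the sufficient direction, assume without loss of generality that $p\nmid |G_2|$, so $kG_2$ is semisimple by Maschke's theorem. Let $M$ be an indecomposable $kG$-module. Restricting to $kG_2$ gives the isotypic decomposition $M=\bigoplus_W M_W$ indexed by simple $kG_2$-modules $W$. Since the $G_1$- and $G_2$-actions commute, the $G_1$-action preserves each $M_W$, making each $M_W$ a $kG$-submodule; indecomposability forces $M=M_W$ for a unique $W$. Setting $V:=\mathrm{Hom}_{kG_2}(W,M)$ with $kG_1$-action $(g\cdot\phi)(w)=g\phi(w)$, which is well-defined because $G_1$ commutes with $G_2$, the evaluation map $V\otimes W\to M$ is an isomorphism of $kG$-modules. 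Any splitting $V=V'\oplus V''$ over $kG_1$ would split $M$, so $V$ is indecomposable; and $W$ is simple, hence indecomposable over the semisimple algebra $kG_2$.

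For the necessary direction I argue the contrapositive. Assume $p\mid |G_i|$ for both $i=1,2$, pick $g_i\in G_i$ of order $p$, and put $H_i=\langle g_i\rangle\cong C_p$ and $H=H_1\times H_2\subseteq G$. Then $kH\cong k[y_1,y_2]/(y_1^p,y_2^p)$ with $y_i=g_i-1$, and the augmentation ideal $N\subset kH$ has dimension $p^2-1$ and one-dimensional socle spanned by $y_1^{p-1}y_2^{p-1}$; since any direct sum decomposition would express the socle as a sum of nonzero socles of the summands, $N$ is indecomposable as a $kH$-module. Indecomposable $kH_i$-modules have dimensions in $\{1,\ldots,p\}$, so a tensor product of two such has dimension $ij$ with $1\le i,j\le p$. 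The only factorization $p^2-1=(p-1)(p+1)$ has $p+1>p$, and any other factorization has a factor exceeding $p$, so $N$ is not of the form $U_1\otimes U_2$ with $U_i$ indecomposable over $kH_i$. Now use $N$ as a detector: by Theorem~\ref{Mac} applied to the trivial double coset, $N$ is a direct summand of $N\!\!\uparrow^G\!\!\downarrow_H$, so by Krull--Schmidt it appears in the restriction of some indecomposable summand $M$ of $N\!\!\uparrow^G$. If every indecomposable $kG$-module were of the form $V_1\otimes V_2$ with $V_i$ indecomposable over $kG_i$, then $M\cong V_1\otimes V_2$, and decomposing $V_i\!\!\downarrow_{H_i}=\bigoplus_j U_{i,j}$ into $kH_i$-indecomposables yields $M\!\!\downarrow_H=\bigoplus_{j,l} U_{1,j}\otimes U_{2,l}$, forcing $N\cong U_{1,j}\otimes U_{2,l}$ for some $j,l$ by Krull--Schmidt, a contradiction.

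The main obstacle is the necessary direction: it requires both the explicit witness $N$ inside $kH$ (whose indecomposability and non-tensor-factorizability must be checked by hand) and the transfer step that promotes this obstruction from $H$ up to $G$. The Mackey and Krull--Schmidt bookkeeping is routine once set up, but the choice of $N$ is crucial: one needs an invariant (here the dimension $p^2-1$) which provably cannot equal $ij$ for any dimensions $i,j$ of $kH_i$-indecomposables, which is why the augmentation ideal of $kH$ works and why Sylow-like restriction to the $p$-part of each factor is the right passage.
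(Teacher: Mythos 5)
The paper offers no proof of this statement at all: it is quoted from Conlon's 1966 paper (reference \cite{conlon}) and used as a black box, so there is no internal argument to compare yours against; you are supplying a proof where the authors cite one. On its own terms your argument is essentially correct. The sufficiency half (isotypic decomposition of $M$ over the semisimple factor $kG_2$, the commuting $G_1$-action preserving each component, and the evaluation isomorphism $\mathrm{Hom}_{kG_2}(W,M)\otimes W\cong M$) is the standard Clifford-theoretic argument; note that bijectivity of evaluation uses $\mathrm{End}_{kG_2}(W)=k$, i.e. the standing hypothesis that $k$ is algebraically closed (a splitting field), which you should acknowledge. The necessity half, with the augmentation ideal $N$ of $k[C_p\times C_p]$ as witness (indecomposable because its socle is spanned by $y_1^{p-1}y_2^{p-1}$, of dimension $p^2-1$, which is not a product of two integers each at most $p$) and the Mackey/Krull--Schmidt transfer up to $G$, is a sound and rather clean route, but one step is stated too quickly: Krull--Schmidt forces $N\cong U_{1,j}\otimes U_{2,l}$ only if each $U_{1,j}\otimes U_{2,l}$ is itself an indecomposable $kH$-module, which you assume silently. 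Over the algebraically closed field $k$ this is true, since $\mathrm{End}_{kH}(U_1\otimes U_2)\cong\mathrm{End}_{kH_1}(U_1)\otimes\mathrm{End}_{kH_2}(U_2)$ is a tensor product of local algebras with residue field $k$ and hence local, but this needs to be said; alternatively you can avoid it, because Krull--Schmidt already gives that $N$ is a direct summand of some $U_{1,j}\otimes U_{2,l}$, and the dimension bound $p^2-1\le\dim(U_{1,j}\otimes U_{2,l})\le p^2$ leaves only the cases $\dim=p^2-1$ (excluded by your factorization argument) and $U_{1,j}\otimes U_{2,l}\cong kH$, which is indecomposable because $kH$ is local, again a contradiction. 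With that one sentence supplied, your proof is complete and stands as a legitimate substitute for the citation.
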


Let $G$ be a finite group. For any $g\in G$, define $\phi_g\in
(kG)^{*}$ by $\phi_g(h)=\delta_{g,h},\ h\in G.$ Then the quantum
double $D(kG)$ of the group algebra $kG$ has a $k$-basis
$\{\phi_g\otimes h|g, h\in G\}$.  In this case, the multiplication
is given by $$(\phi_g\otimes h)(\phi_{g'}\otimes
h')=\phi_g\phi_{hg'h^{-1}}\otimes
 hh'=\delta_{g, hg'h^{-1}}\phi_g\otimes hh'.$$ Thus the
identity is $1_{D(kG)}=\sum_{g\in G}\phi_g\otimes 1$, where $1$ is
the identity of $G$. The comutiplication $\Delta$, the counit
$\varepsilon$ and the antipode $S$ are given by
$$\begin{array}{c}
\Delta(\phi_g\otimes h)=\sum_{x\in G}
(\phi_x\otimes h)\otimes(\phi_{gx^{-1}}\otimes h),\\
\varepsilon(\phi_g\otimes h)=\delta_{1, g},\ S(\phi_g\otimes
h)=\phi_{h^{-1}g^{-1}h}\otimes h^{-1},\\
\end{array}$$ where $g, h\in G$. For the definition of quantum double $D(H)$
of any finite dimensional Hopf algebra $H$, the reader is directed
to \cite[Chapter IX. 4]{kassel}.

\begin{defi}
For a Hopf algebra $H$ with a bijective antipode $S$, a
Yetter-Drinfeld $H$-module $M$ is both a left $H$-module and a right
$H$-comodule satisfying the following two equivalent compatibility
conditions
$$\begin{array}{c}
\sum h_1\cdot m_0\otimes h_2m_1=\sum (h_2\cdot m)_0\otimes (h_2\cdot
m)_1h_1,\\
\sum(h\cdot m)_0\otimes (h\cdot m)_1=\sum h_2\cdot m_0\otimes
h_3m_1S^{-1}(h_1),\\
\end{array}$$ for all $h\in H$ and $m\in M$.
\end{defi}
The category of Yetter-Drinfeld $H$-modules is denoted by
${}_H\mathcal{YD}^H$. The following theorem gives the connection
between the $D(H)$-modules and the Yetter-Drinfeld $H$-modules.

\begin{theo}\cite{majid}
Let $H$ be a finite dimensional Hopf algebra. Then the
category ${}_H\mathcal{YD}^H$ of Yetter-Drinfeld $H$-modules can be identified with
the category ${}_{D(H)}\mathcal{M}$ of left modules over the quantum
double $D(H)$.
\end{theo}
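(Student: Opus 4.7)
The strategy is to construct mutually inverse functors between the two categories. Given a left $D(H)$-module $M$, I would exploit the two canonical algebra embeddings into $D(H)$. The map $H \hookrightarrow D(H)$, $h \mapsto \varepsilon \otimes h$ (which in the group case is $h \mapsto \sum_{g \in G} \phi_g \otimes h$, the element denoted $1_{D(kG)}$-times-$h$ in the excerpt), endows $M$ with a left $H$-module structure. Separately, the map $H^{*\mathrm{cop}} \hookrightarrow D(H)$, $f \mapsto f \otimes 1$, endows $M$ with a left $H^{*\mathrm{cop}}$-module structure; since $H$ is finite dimensional, this is equivalent to a right $H$-comodule structure $\rho\colon M \to M \otimes H$ defined by $\rho(m) = \sum_i (e^i \cdot m) \otimes e_i$ for dual bases $\{e_i\} \subset H$ and $\{e^i\} \subset H^*$.

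\textbf{Compatibility.} The heart of the matter is to verify that these two structures satisfy the YD compatibility condition. This will translate the cross-relation between $H$ and $H^*$ inside $D(H)$. In the group case the relation reduces to the transparent identity $h \phi_g = \phi_{hgh^{-1}} h$ in $D(kG)$, reflecting the fact that $M$ decomposes as a $G$-graded module $M = \bigoplus_{g \in G} M_g$ with $M_g = \phi_g \cdot M$, and $h \cdot M_g \subseteq M_{hgh^{-1}}$; this is precisely the YD condition in the group setting. For general $H$, applying the multiplication rule of $D(H)$ to the product $(\varepsilon \otimes h)(f \otimes 1)$ and comparing with $(f \otimes 1)(\varepsilon \otimes h)$ (after inserting an expansion of $\Delta^{(2)}(h)$ and using the antipode) yields exactly the second equivalent compatibility condition in the definition of ${}_H \mathcal{YD}^H$, including the appearance of $S^{-1}$.

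\textbf{Inverse functor and main obstacle.} Conversely, given $(M, \cdot, \rho) \in {}_H\mathcal{YD}^H$, I would define a $D(H)$-action by $(f \otimes h) \cdot m := \sum f((h \cdot m)_1)(h \cdot m)_0$ and check associativity by expanding the product $(f \otimes h)(f' \otimes h')$ in $D(H)$ and using the YD compatibility to move the $H$-action past the coaction; the YD condition is precisely what is required. Both assignments are functorial (morphisms in either category intertwine all structures) and visibly inverse to each other. The main obstacle is not conceptual but notational: aligning the conventions (left vs.\ right, $H^*$ vs.\ $H^{*\mathrm{cop}}$, $S$ vs.\ $S^{-1}$) so that the cross-relation in $D(H)$ matches the YD compatibility \emph{exactly}. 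Once the conventions are fixed, the verification is a direct Sweedler-notation calculation, which is why the result (classical, due to Majid) is usually just cited.
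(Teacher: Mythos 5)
The paper does not prove this statement at all: it is quoted from \cite{majid} and used as a black box, so there is no in-paper argument to compare against. Your sketch is the standard proof behind that citation (restrict a $D(H)$-module along $H\hookrightarrow D(H)$ and $H^{*}\hookrightarrow D(H)$, convert the $H^{*}$-action into a right $H$-coaction via dual bases, and match the cross-relation in $D(H)$ with the compatibility condition involving $S^{-1}$, with the inverse functor given by $(f\otimes h)\cdot m=\sum f\bigl((h\cdot m)_1\bigr)(h\cdot m)_0$), and it is correct in outline, with only the convention-matching you already flag left to carry out; in the group case it specializes exactly to the grading picture $M=\bigoplus_{g}M_g$, $h\cdot M_g\subseteq M_{hgh^{-1}}$ that the paper uses later.
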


Hence, we just need to study the Yetter-Drinfeld modules in this
paper.

Now let $H=kG$ be a finite dimensional group algebra. Let
$\mathcal{K}(G)$ be the set of conjugate classes of $G$. For any
$g\in G$, let $C_G(g)=\{x\in G|xg=gx\}$ be the centralizer of $g$ in
$G$. For any $C\in \mathcal{K}(G)$, fix a $g_C\in C$. Then
$\{g_C|C\in \mathcal{K}(G)\}$ is a set of representatives of
conjugate classes of $G$. Now let $N$ be a $kC_G(g_C)$-module. Then
$N\!\!\uparrow^G=kG\otimes_{kC_G(g_C)}N$ is a $kG$-module. Define a
$k$-linear map $\varphi:N\!\!\uparrow^G\rightarrow
N\!\!\uparrow^G\otimes kG$ by $\varphi(g\otimes n)=(g\otimes
n)\otimes gg_Cg^{-1}$ for all $g\in G$ and $n\in N$. Clearly, we
have:

\begin{lem}
Let $C\in \mathcal{K}(G)$ and $N$ be a $kC_G(g_C)$-module. Then
$(N\!\!\uparrow^G, \varphi)$ is a Yetter-Drinfeld $kG$-module,
denoted by $D(N)$.
\end{lem}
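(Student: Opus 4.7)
The plan is to verify three things: (i) $\varphi$ is a well-defined $k$-linear map on $N\!\!\uparrow^G=kG\otimes_{kC_G(g_C)}N$; (ii) $(N\!\!\uparrow^G,\varphi)$ is a right $kG$-comodule; (iii) the Yetter-Drinfeld compatibility condition holds. All three amount to short computations since $kG$ is cocommutative with $\Delta(x)=x\otimes x$ and $S^{-1}(x)=x^{-1}$ for $x\in G$, and since $g_C$ is central in $C_G(g_C)$.

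For (i), I would check that $\varphi$ respects the balancing relation. For $x\in C_G(g_C)$ one has $xg_Cx^{-1}=g_C$, so
\[
\varphi(gx\otimes n)=(gx\otimes n)\otimes (gx)g_C(gx)^{-1}=(gx\otimes n)\otimes gg_Cg^{-1},
\]
while $\varphi(g\otimes xn)=(g\otimes xn)\otimes gg_Cg^{-1}=(gx\otimes n)\otimes gg_Cg^{-1}$, giving the same value. Hence $\varphi$ descends to a well-defined map on $N\!\!\uparrow^G$.

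For (ii), since $\Delta(gg_Cg^{-1})=gg_Cg^{-1}\otimes gg_Cg^{-1}$ in $kG$,
\[
(\mathrm{id}\otimes\Delta)\varphi(g\otimes n)=(g\otimes n)\otimes gg_Cg^{-1}\otimes gg_Cg^{-1}=(\varphi\otimes\mathrm{id})\varphi(g\otimes n),
\]
establishing coassociativity, and $(\mathrm{id}\otimes\varepsilon)\varphi(g\otimes n)=g\otimes n$ since $\varepsilon(gg_Cg^{-1})=1$.

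For (iii), it suffices to check the compatibility condition on a generating set, namely on $h\in G$ acting on $m=g\otimes n\in N\!\!\uparrow^G$. Using $\Delta(h)=h\otimes h$, the first compatibility condition reduces to $h\cdot m_0\otimes hm_1=(h\cdot m)_0\otimes (h\cdot m)_1h$. A direct computation gives
\[
h\cdot m_0\otimes hm_1=(hg\otimes n)\otimes h\,gg_Cg^{-1},
\]
while
\[
(h\cdot m)_0\otimes (h\cdot m)_1h=(hg\otimes n)\otimes (hg)g_C(hg)^{-1}\,h=(hg\otimes n)\otimes h\,gg_Cg^{-1},
\]
and these agree. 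The second compatibility condition is then automatic by the equivalence stated in the definition. There is no genuine obstacle here; the only subtle point is the well-definedness check in step (i), where it is essential that $g_C$ is fixed under conjugation by elements of $C_G(g_C)$.
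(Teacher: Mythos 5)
Your verification is correct and is exactly the routine check the paper leaves to the reader (the lemma is stated there with "Clearly," and no proof is given): well-definedness over the balanced tensor product via $xg_Cx^{-1}=g_C$ for $x\in C_G(g_C)$, the comodule axioms from the fact that $gg_Cg^{-1}$ is group-like, and the Yetter-Drinfeld compatibility checked on the spanning elements $h\in G$ and $g\otimes n$. Nothing is missing, and linearity justifies reducing to these generators, so the argument stands as the paper's intended (omitted) proof.
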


Let $M$ be a Yetter-Drinfeld $kG$-module with coaction $\varphi:
M\rightarrow M\otimes kG$.  For $C\in \mathcal{K}(C)$, let
$$M_C=\bigoplus_{g\in C}M_g$$
where $M_g=\{m\in M|\varphi(m)=m\otimes g\}$. An easy computation
shows that $M_g$ is a $kC_G(g)$-submodule of
$M\!\!\downarrow_{C_G(g)}$ and $M_C$ is a Yetter-Drinfeld
$kG$-submodule of $M$.

By \cite{witherspoon, mason}, we have the following characterization
of Yetter-Drinfeld $kG$-modules.

\begin{theo}\label{main1}
Let $C\in \mathcal{K}(G)$ and $N$ be a $kC_G(g_C)$-module. Then
$D(N)$ is indecomposable (resp., simple) Yetter-Drinfeld $kG$-module
if and only if $N$ is indecomposable (resp., simple)
$kC_G(g_C)$-module.
\end{theo}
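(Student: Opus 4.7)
The strategy is to show that the functor $N\mapsto D(N)$ is essentially an equivalence between $kC_G(g_C)$-modules and Yetter-Drinfeld $kG$-modules supported on the conjugacy class $C$, with inverse given by taking the $g_C$-homogeneous component $M\mapsto M_{g_C}$. Since indecomposability and simplicity are categorical properties, they will then transfer in both directions.

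For the easy direction, if $N=N_1\oplus N_2$ as $kC_G(g_C)$-modules, induction preserves direct sums, and the coaction $\varphi(g\otimes n)=(g\otimes n)\otimes gg_Cg^{-1}$ only depends on the coset of $g$, not on $n$, so $D(N)=D(N_1)\oplus D(N_2)$ as Yetter-Drinfeld modules. Likewise any $kC_G(g_C)$-submodule $N'\subseteq N$ gives a YD-submodule $D(N')\subseteq D(N)$. Hence indecomposability (respectively simplicity) of $D(N)$ forces the same for $N$.

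For the reverse direction, write $D(N)=\bigoplus_{g_i}g_i\otimes N$ over left coset representatives of $C_G(g_C)$ in $G$. Since the map $g_i\mapsto g_ig_Cg_i^{-1}$ is a bijection from coset representatives to elements of $C$, the $g$-homogeneous component of $D(N)$ equals $g_i\otimes N$ for the unique $g_i$ with $g_ig_Cg_i^{-1}=g$; in particular $D(N)_{g_C}=1\otimes N\cong N$ as $kC_G(g_C)$-modules. Now let $M$ be any nonzero YD-submodule of $D(N)$. Since $D(N)$ is supported on $C$, we have $M=\bigoplus_{g\in C}M_g$; pick $g=xg_Cx^{-1}\in C$ with $M_g\neq 0$. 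For $m\in M_g$, the compatibility condition specialized to the group algebra gives $\varphi(x^{-1}\cdot m)=x^{-1}\cdot m\otimes x^{-1}gx=x^{-1}\cdot m\otimes g_C$, so $x^{-1}\cdot M_g\subseteq M_{g_C}$; invertibility of the $G$-action then forces $M_{g_C}\neq 0$.

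Applying this to a YD-decomposition $D(N)=M_1\oplus M_2$ with both $M_i\neq 0$ yields a nontrivial decomposition $N\cong D(N)_{g_C}=(M_1)_{g_C}\oplus(M_2)_{g_C}$. For the simple case, one observes additionally that $D(N)$ is generated as a $kG$-module by $1\otimes N=D(N)_{g_C}$, so any YD-submodule $M$ with $M_{g_C}=N$ must equal $D(N)$; combined with the nonvanishing result, this shows a proper nonzero YD-submodule $M\subsetneq D(N)$ produces a proper nonzero $kC_G(g_C)$-submodule $M_{g_C}$ of $N$. The only real subtlety is the verification that $M_{g_C}\neq 0$ whenever $M\neq 0$; this is where the Yetter-Drinfeld compatibility (as opposed to just the module or comodule structure alone) is essential, since it is precisely what lets the $G$-action translate among homogeneous components.
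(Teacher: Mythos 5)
Your argument is correct. Note that the paper never proves this statement itself: it is quoted from the cited sources (Witherspoon and Mason), so there is no internal proof to compare against. Your write-up supplies essentially the standard argument underlying those references. The key points are all handled correctly: $kG$-comodules are $G$-gradings, so a Yetter-Drinfeld submodule $M\subseteq D(N)$ is automatically graded with $M=\bigoplus_{g\in C}M_g$; the identification $D(N)_{g_C}=1\otimes N\cong N$ as $kC_G(g_C)$-modules; the compatibility condition for group-likes, $\varphi(x\cdot m)=x\cdot m\otimes x m_1 x^{-1}$, which gives $x^{-1}\cdot M_{xg_Cx^{-1}}\subseteq M_{g_C}$ and hence $M_{g_C}\neq 0$ whenever $M\neq 0$; and the fact that $1\otimes N$ generates $D(N)$ over $kG$, which closes the simple case. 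Together with the easy direction (induction preserves direct sums and submodules, compatibly with the coaction), this yields both equivalences. The cited references package the same mechanism more globally, as an equivalence between the full subcategory of Yetter-Drinfeld $kG$-modules supported on the class $C$ and the category of $kC_G(g_C)$-modules (Witherspoon obtains it from a block decomposition of $D(kG)$ indexed by conjugacy classes); your componentwise argument is a self-contained, elementary instance of that equivalence rather than a genuinely different route, and it proves exactly what is needed.
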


\begin{theo}\label{main2}
Let $M$ be an indecomposable (resp., simple) Yetter-Drinfeld
$kG$-module. Then there exists a conjugate class $C\in
\mathcal{K}(G)$ such that $M=M_C\cong D(M_{g_C})$.
\end{theo}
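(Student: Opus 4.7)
The strategy is to use the right $kG$-coaction on $M$ to split it into pieces indexed by conjugacy classes, and then to identify the unique nonzero piece with an induced module of the form $D(N)$.

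\emph{Step 1 (comodule decomposition).} Since the simple subcoalgebras of $kG$ are the one-dimensional spans $k g$ for $g\in G$, every right $kG$-comodule splits canonically as $M=\bigoplus_{g\in G}M_g$ with $M_g=\{m\in M\mid \varphi(m)=m\otimes g\}$. Collecting indices by conjugacy class gives $M=\bigoplus_{C\in\mathcal{K}(G)}M_C$ as right $kG$-comodules.

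\emph{Step 2 (the $G$-action permutes the $M_g$ by conjugation).} For $h\in G$ and $m\in M_g$, the second YD compatibility condition, combined with $\Delta(h)=h\otimes h$ and $S^{-1}(h)=h^{-1}$, reduces to
\begin{equation*}
(h\cdot m)_0\otimes (h\cdot m)_1 \;=\; h\cdot m\otimes hgh^{-1},
\end{equation*}
so $h\cdot M_g\subseteq M_{hgh^{-1}}$. Hence each $M_C$ is a Yetter-Drinfeld submodule of $M$ and, in particular, $M_{g_C}$ is a $kC_G(g_C)$-submodule of $M\!\!\downarrow_{C_G(g_C)}$. The decomposition $M=\bigoplus_C M_C$ is then a direct sum of YD-submodules, so the assumption that $M$ is indecomposable (a fortiori simple) forces $M=M_C$ for exactly one $C\in\mathcal{K}(G)$.

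\emph{Step 3 (identifying $M$ with $D(M_{g_C})$).} Set $N:=M_{g_C}$. Choose left coset representatives $x_1,\ldots,x_t$ of $C_G(g_C)$ in $G$ so that $\{x_ig_Cx_i^{-1}\}$ enumerates $C$. Step~2 implies that each map $x_i\cdot:N\to M_{x_ig_Cx_i^{-1}}$ is a bijection (with inverse $x_i^{-1}\cdot$), so $M=M_C=\bigoplus_i x_i\cdot N$. Now define
\begin{equation*}
f\colon D(N)=kG\otimes_{kC_G(g_C)}N\longrightarrow M,\qquad g\otimes n\mapsto g\cdot n.
\end{equation*}
Then $f$ is well-defined (because $C_G(g_C)$ preserves $N$), $kG$-linear, and surjective by the orbit decomposition above. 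A dimension count $\dim D(N)=[G:C_G(g_C)]\dim N=|C|\dim N=\dim M$ upgrades it to a $kG$-module isomorphism. Finally, since $g\cdot n\in M_{gg_Cg^{-1}}$ by Step~2, one has $\varphi(g\cdot n)=g\cdot n\otimes gg_Cg^{-1}$, which matches $\varphi_{D(N)}(g\otimes n)$; thus $f$ is also a comodule map, hence a YD-isomorphism $M\cong D(M_{g_C})$.

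The one delicate point is the reduction in Step~2 establishing the conjugation equivariance $h\cdot M_g\subseteq M_{hgh^{-1}}$; this is the content that genuinely uses the YD compatibility, and once it is in place the remainder is an orbit-decomposition and dimension-count argument.
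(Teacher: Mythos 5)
Your proof is correct, and it follows essentially the same route the paper relies on: the paper itself gives no proof of this theorem (it is quoted from the cited works of Witherspoon and Mason), but its preliminary remarks already set up exactly your ingredients --- the grading $M=\bigoplus_{g}M_g$, the fact that $M_g$ is a $kC_G(g)$-submodule and $M_C$ a Yetter-Drinfeld submodule --- and your Steps 2--3 (conjugation equivariance from the YD condition applied to grouplike $h$, then the orbit decomposition and dimension count identifying $M_C$ with the induced module $D(M_{g_C})$) are the standard completion of that argument. No gaps; the delicate point you flag, $h\cdot M_g\subseteq M_{hgh^{-1}}$, is handled correctly.
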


\begin{cor}\label{main3}
Let $N_1$ and $N_2$ be indecomposable (resp., simple)
$kC_G(g_C)$-module. Then $D(N_1)\cong D(N_2)$ if and only if
$N_1\cong N_2$.
\end{cor}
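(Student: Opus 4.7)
The plan is to prove the ``if'' direction by functoriality and the converse by recovering $N$ as a canonical $kC_G(g_C)$-summand of $D(N)$ picked out by the coaction.

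For the forward implication, given an isomorphism $f:N_1\to N_2$ of $kC_G(g_C)$-modules, the universal property of $\otimes_{kC_G(g_C)}$ yields a $kG$-module isomorphism $F=\mathrm{id}_{kG}\otimes f:D(N_1)\to D(N_2)$. Inspecting the formula $\varphi(g\otimes n)=(g\otimes n)\otimes gg_Cg^{-1}$ from the lemma preceding Theorem \ref{main1}, one sees immediately that $F$ intertwines the two coactions, so it is a morphism in ${}_{kG}\mathcal{YD}^{kG}$.

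The converse rests on the identification $D(N)_{g_C}\cong N$ as $kC_G(g_C)$-modules. Fix a set of left coset representatives $\{g_i\}$ of $C_G(g_C)$ in $G$ with $g_1=1$, so that $D(N)=\bigoplus_i g_i\otimes N$. The formula for $\varphi$ places $g_i\otimes N$ inside $D(N)_{g_ig_Cg_i^{-1}}$, and $g_ig_Cg_i^{-1}=g_C$ forces $g_i\in C_G(g_C)$, hence $g_i=1$. Therefore $D(N)_{g_C}=1\otimes N$, and the residual action of $C_G(g_C)$ on $1\otimes N$ is by construction the original action on $N$.

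Now if $\Phi:D(N_1)\to D(N_2)$ is an isomorphism of Yetter-Drinfeld modules, then $\Phi$ is a $kG$-comodule map and so preserves the decomposition into coaction-homogeneous components; being also $kG$-linear, its restriction to the $g_C$-component is $kC_G(g_C)$-linear. Combining with the identifications of the previous paragraph yields $N_1\cong D(N_1)_{g_C}\cong D(N_2)_{g_C}\cong N_2$. There is no substantive obstacle here: the corollary is really a bookkeeping consequence of the explicit form of $\varphi$ together with the functoriality of $N\mapsto D(N)$ already implicit in Theorems \ref{main1} and \ref{main2}.
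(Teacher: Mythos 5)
Your proof is correct, and it follows exactly the route the paper intends: the corollary is stated there without proof as an immediate consequence of the cited characterization (Theorems \ref{main1}, \ref{main2}) and of the homogeneous-component construction $M_g$ set up before them, and your argument—functoriality of $N\mapsto kG\otimes_{kC_G(g_C)}N$ for the ``if'' direction, and recovering $N$ as the component $D(N)_{g_C}=1\otimes N$ preserved by any Yetter-Drinfeld isomorphism for the converse—is precisely that argument made explicit. No gaps; the only detail worth keeping in mind is the one you already use implicitly, namely that the coaction gives a $G$-grading $D(N)=\bigoplus_i g_i\otimes N$ with $g_i\otimes N$ of degree $g_ig_Cg_i^{-1}$, so comodule maps are graded and the $g_C$-component is exactly $1\otimes N$ with its original $kC_G(g_C)$-action.
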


Thus, up to isomorphism, there is a one-one correspondence between
the indecomposable (resp., simple) $kC_G(g_C)$-modules and
indecomposable (resp., simple) Yetter-Drinfeld $kG$-modules.

\section{ The indecomposable representations of some finite groups}

In this section, we discuss the representations of Klein four group,
cyclic groups and dihedral groups.

Let $K_4=\langle 1, a, b, ab|a^2=b^2=1,(ab)^2=1\rangle$ be the Klein four group.
Then $K_4$ is a direct product of $2$ cyclic groups of order $2$,
and $kK_4$ is a semisimple algebra. Obviously, $kK_4$ has exactly $4$
irreducible representations, and all of them are of degree one. They can be described as follows:
\begin{equation*}
\begin{cases}\rho_1(a)=1,   \\ \rho_1(b)=1;
\end{cases}
\begin{cases}\rho_2(a)=1,   \\ \rho_2(b)=-1;
\end{cases}
\begin{cases}\rho_3(a)=-1,   \\ \rho_3(b)=1;
\end{cases}
\begin{cases}\rho_4(a)=-1,   \\ \rho_4(b)=-1.
\end{cases}
\end{equation*}

Let $C_n=\{1, g, \cdots, g^{n-1}\}$ be a cyclic group of order $n$.
If $p\nmid n$, then $kC_n$ is semisimple. In this case, $kC_n$ has
$n$ irreducible representations, and all of them are of degree $1$.
Furthermore, they can be described as $\rho_i(g)=\xi^{i-1},
\mbox{for\,} 1\leq i\leq n,$ where $\xi\in k$ is an $n$-th primitive
root of unity.

From now on, assume that $n=p^st$ with $p\nmid t$ and that $\xi$ is
a $t$-th primitive root of unity in $k$.

\begin{lem}
$kC_n$ is of finite representation type.
\end{lem}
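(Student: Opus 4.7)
The plan is to invoke Higman's theorem (Theorem \ref{higman}) directly, which states that $kG$ is of finite representation type if and only if $G$ has cyclic Sylow $p$-subgroups. Since $C_n$ is itself cyclic, every subgroup of $C_n$ is cyclic; in particular, the unique Sylow $p$-subgroup of $C_n$ (namely the subgroup of order $p^s$ generated by $g^t$) is cyclic. Therefore Higman's criterion is satisfied and the conclusion follows immediately.

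There is essentially no obstacle here: the lemma is a one-line consequence of a theorem already stated in the preliminaries. The only thing worth recording explicitly is the identification of the Sylow $p$-subgroup of $C_n = \langle g \rangle$ with $\langle g^t \rangle$, which has order $p^s$ and is automatically cyclic. I would write the proof as a single sentence citing Theorem \ref{higman}.
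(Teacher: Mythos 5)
Your proof is correct and follows the same route as the paper: both invoke Theorem \ref{higman} together with the observation that every subgroup of the cyclic group $C_n$ (in particular its Sylow $p$-subgroup $\langle g^t\rangle$ of order $p^s$) is cyclic. Nothing further is needed.
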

\begin{proof}
It follows from Theorem \ref{higman} and the fact that the subgroups
of a cyclic group are also cyclic.
\end{proof}

\begin{lem}\label{number}
$kC_n$ has $n$ non-isomorphic indecomposable modules.
\end{lem}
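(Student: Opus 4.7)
The plan is to apply Theorem \ref{berman} directly. Since $C_n$ is abelian, every subgroup is normal, so in particular any Sylow $p$-subgroup $P$ of $C_n$ is normal in $C_n$. Moreover, because $C_n$ is cyclic, $P$ is the unique subgroup of order $p^s$ (given $n = p^s t$ with $(p,t)=1$), so $|P| = p^s$.

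It remains to count the number $r$ of $p$-regular conjugacy classes of $C_n$. Because $C_n$ is abelian, each conjugacy class is a singleton, so $r$ equals the number of $p$-regular elements of $C_n$, i.e. elements whose order is coprime to $p$. In the cyclic group $C_n$ these are exactly the elements of the unique subgroup of order $t$, hence $r = t$.

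Plugging into Theorem \ref{berman}, the number of non-isomorphic indecomposable $kC_n$-modules is $|P|\, r = p^s t = n$. The main (and only) subtlety is making sure the hypotheses of Theorem \ref{berman} apply, namely that $P$ is normal, which is immediate here from abelianness; the $p$-regular class count is then a routine observation about cyclic groups.
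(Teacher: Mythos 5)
Your proof is correct and follows the paper's own argument: both invoke Theorem \ref{berman} with the normal Sylow $p$-subgroup of order $p^s$ and the count of $t$ $p$-regular conjugacy classes of $C_n$. You simply spell out the routine verifications (normality from abelianness, $r=t$) that the paper leaves as "easy to check."
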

\begin{proof}
Note that $n=p^st$. It is easy to check that the number of $p$-regular conjugate classes
of $C_n$ is $t$. Since the Sylow $p$-subgroups of $C_n$ are normal, the
lemma then follows from Theorem \ref{berman}.
\end{proof}

The following lemma is well known in the representation theory of
finite groups.

\begin{lem}\label{lemwellknown}
Let $P=\{1, g, \cdots, g^{p^s-1}\}$ be a cyclic group of order
$p^s$. Then  the $p^s$ non-isomorphic indecomposable
representations of $kP$ can be represented by the Jordan matrices
\begin{center}
 $\rho_r(g)=\left(\begin{tabular}{cccccc}
$1$&$1$&$0$&$\cdots$&$0$&$0$\\
$0$&$1$&$1$&$\cdots$&$0$&$0$\\
$0$&$0$&$1$&$\cdots$&$0$&$0$\\
$\cdots$&$\cdots$&$\cdots$&$\cdots$&$\cdots$&$\cdots$\\
$0$&$0$&$0$&$\cdots$&$1$&$1$\\
$0$&$0$&$0$&$\cdots$&$0$&$1$\\
\end{tabular}\right)_{r\times r}\ ,$
\end{center}
where $1\leqslant r \leqslant p^s$.
\end{lem}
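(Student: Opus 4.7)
The plan is to realize $kP$ as a truncated polynomial algebra and then read the indecomposables off directly. Since $k$ has characteristic $p$, in $kP$ we have the identity $(g-1)^{p^s} = g^{p^s}-1 = 0$, and $g-1$ is nilpotent of index exactly $p^s$. Hence the $k$-algebra map $k[x]/(x^{p^s}) \to kP$ sending $x \mapsto g-1$ is a well-defined surjection between two $p^s$-dimensional algebras, and is therefore an isomorphism. Under this identification, $g$ corresponds to $1+x$.

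The algebra $R := k[x]/(x^{p^s})$ is a local Artinian principal ideal ring whose only ideals form the chain $0 = (x^{p^s}) \subset (x^{p^s-1}) \subset \cdots \subset (x) \subset R$. Standard module theory over such a uniserial ring shows that every finitely generated $R$-module decomposes as a direct sum of cyclic modules, and every cyclic $R$-module has the form $M_r := R/(x^r) = k[x]/(x^r)$ for some $1 \le r \le p^s$. The endomorphism ring of $M_r$ is $R/(x^r)$, which is local, so $M_r$ is indecomposable, and the $M_r$ are pairwise non-isomorphic because $\dim_k M_r = r$. Lemma \ref{number} (specialised to $t=1$) already tells us that there are exactly $p^s$ isomorphism classes of indecomposable $kP$-modules, so $M_1,\dots,M_{p^s}$ is a complete list.

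It remains to exhibit $g$ as the advertised Jordan block on $M_r$. Using the ordered basis $\{x^{r-1}, x^{r-2}, \dots, x, 1\}$ of $M_r$, multiplication by $g = 1+x$ fixes $x^{r-1}$ modulo $x^r$ and sends $x^i \mapsto x^i + x^{i+1}$ for $0 \le i \le r-2$, so the matrix of $g$ is upper-triangular with $1$'s on the diagonal and on the superdiagonal --- exactly the displayed form. I do not anticipate any genuine obstacle here; the only step requiring attention is the choice of basis ordering, which controls whether the Jordan block appears in upper- or lower-triangular shape.
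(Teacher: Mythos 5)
Your proof is correct. Note, though, that the paper offers no argument at all for this lemma: it is invoked as a ``well known'' fact from modular representation theory, so there is no internal proof to compare against. Your route --- identifying $kP$ with the truncated polynomial algebra $k[x]/(x^{p^s})$ via $x\mapsto g-1$ (the map is onto because $g=1+x$ generates, and a surjection between $p^s$-dimensional algebras is an isomorphism), then quoting the structure of finitely generated modules over this uniserial local ring, and finally writing $g=1+x$ on the basis $x^{r-1},\dots,x,1$ of $k[x]/(x^r)$ --- is the standard textbook argument and it is complete. Two small remarks. First, your appeal to Lemma \ref{number} with $t=1$ is superfluous: once you know every finitely generated module over $k[x]/(x^{p^s})$ is a direct sum of the cyclic modules $M_r=k[x]/(x^r)$, and that each $M_r$ is indecomposable (local endomorphism ring) with the $M_r$ pairwise non-isomorphic by dimension, you already have that $M_1,\dots,M_{p^s}$ is the complete list of indecomposables; in any case the citation is not circular, since Lemma \ref{number} rests on Berman's theorem and not on the present lemma, so no harm is done. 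Second, the nilpotency index claim ``exactly $p^s$'' is not needed as a separate step --- it falls out of the isomorphism you establish by the dimension count --- so you could drop it or present it as a consequence rather than an input.
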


Theoretically, there is a perfect method to describe the
indecomposable representations of an arbitrary cyclic group. Let
$C_n=\langle a\rangle$ be a cyclic group of order $n$.
Then $C_n$ is a direct product of $2$ cyclic groups
$C_{p^s}=\langle a^t\rangle$ and $C_{t}=\langle a^{p^s}\rangle$. The
indecomposable matrix representations of $C_{p^s}$ and $C_{t}$ are
well described. Following Theorem \ref{conlon}, the indecomposable
matrix representations of $C_n$ can be described too. However, the
description depends on finding the solution of
equation $tx+p^sy=p^st+1$, where $1\leqslant x\leqslant p^s$, $1\leqslant y\leqslant
t-1$. Hence, we will use a transparent method instead of what we
mentioned above.

\begin{theo}\label{prrep}
Let $C_n=\langle a \rangle$ be a cyclic group of order
$n$. Then the $n$ non-isomorphic indecomposable
representations of $kC_n$ can be described as follows:
\begin{center}
 $\rho_{r, i}(a)=\left(\begin{tabular}{cccccc}
$\xi^i$&$1$&$0$&$\cdots$&$0$&$0$\\
$0$&$\xi^i$&$1$&$\cdots$&$0$&$0$\\
$0$&$0$&$\xi^i$&$\cdots$&$0$&$0$\\
$\cdots$&$\cdots$&$\cdots$&$\cdots$&$\cdots$&$\cdots$\\
$0$&$0$&$0$&$\cdots$&$\xi^i$&$1$\\
$0$&$0$&$0$&$\cdots$&$0$&$\xi^i$\\
 \end{tabular}\right)_{r\times r}\ ,$
\end{center}
where $1\leqslant r \leqslant p^s$ and $0\leqslant i \leqslant t-1$.
\end{theo}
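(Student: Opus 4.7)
The plan is to verify three properties of the proposed matrices and then finish with a counting argument. Writing $A_{r,i} = \xi^i I_r + N_r$, where $N_r$ is the strictly upper-triangular $r \times r$ matrix with $1$'s on the superdiagonal (so $N_r^r = 0$), I would show that (a) each $A_{r,i}$ satisfies $A_{r,i}^n = I_r$ and hence $\rho_{r,i}$ really defines a representation of $C_n$; (b) each $\rho_{r,i}$ is indecomposable; and (c) distinct pairs $(r,i)$ give non-isomorphic representations.

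For (a), I would use the characteristic-$p$ Frobenius identity $(X+Y)^{p^s} = X^{p^s} + Y^{p^s}$, valid whenever $XY = YX$, applied to $X = \xi^i I_r$ and $Y = N_r$. Since $r \leq p^s$ forces $N_r^{p^s} = 0$, this gives $A_{r,i}^{p^s} = \xi^{ip^s} I_r$; raising to the $t$-th power and using $\xi^n = (\xi^t)^{p^s} = 1$ yields $A_{r,i}^n = \xi^{in} I_r = I_r$, as required.

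For (b), the commutant of $A_{r,i}$ in $M_r(k)$ equals $k[A_{r,i}] = k[N_r] \cong k[x]/(x^r)$, which is a local ring; consequently $\rho_{r,i}$ admits no nontrivial idempotent endomorphism and is indecomposable. For (c), an isomorphism $\rho_{r,i} \cong \rho_{r',i'}$ is an element of $GL_r(k)$ conjugating $A_{r,i}$ to $A_{r',i'}$; but $A_{r,i}$ has Jordan type equal to a single block of size $r$ with unique eigenvalue $\xi^i$, and since $\xi^0, \xi^1, \ldots, \xi^{t-1}$ are distinct in $k$, the Jordan invariants force $(r,i) = (r',i')$.

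Finally, the index set $\{(r,i) : 1 \leq r \leq p^s,\ 0 \leq i \leq t-1\}$ has cardinality $p^s t = n$, which by Lemma \ref{number} is exactly the total number of isomorphism classes of indecomposable $kC_n$-modules, so the list is complete. The main obstacle is step (a): one has to notice that the bound $r \leq p^s$ is precisely what makes $N_r^{p^s}$ vanish, so that the Frobenius expansion of $A_{r,i}^{p^s}$ collapses to a scalar matrix. Everything else is standard Jordan-form linear algebra combined with the counting given by Lemma \ref{number}.
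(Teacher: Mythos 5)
Your proof is correct, but it follows a genuinely different route from the paper's. The paper constructs these representations by induction from the Sylow $p$-subgroup $P=\langle a^t\rangle$: it takes the Jordan-block representations $\rho_r$ of $kP$, forms the induced matrix $\Omega_{r,t}(a)$ (a block companion-type matrix), computes its characteristic polynomial $(\lambda-1)^r(\lambda-\xi)^r\cdots(\lambda-\xi^{t-1})^r$, and then argues -- invoking the count of Lemma \ref{number} -- that each of the $t$ Jordan segments of the induced matrix consists of a single $r\times r$ block, which yields exactly the matrices $\rho_{r,i}(a)$. You instead verify the list directly: writing $A_{r,i}=\xi^iI_r+N_r$, the Frobenius identity in characteristic $p$ together with $N_r^{p^s}=0$ (this is exactly where $r\leqslant p^s$ enters) gives $A_{r,i}^n=I_r$, so each matrix really defines a representation of $C_n$; indecomposability follows because the commutant of a non-derogatory single Jordan block is $k[N_r]\cong k[x]/(x^r)$, a local ring with no nontrivial idempotents; pairwise non-isomorphism follows from the Jordan invariants, since the $t$-th roots of unity $\xi^0,\dots,\xi^{t-1}$ are distinct as $p\nmid t$; and completeness follows because your $p^st=n$ classes exhaust the $n$ indecomposables guaranteed by Lemma \ref{number}. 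Both arguments ultimately lean on Lemma \ref{number}, but they use it differently: the paper needs it inside the Jordan-form analysis to rule out splitting of the segments, while you need it only at the end to certify that the explicitly verified list is complete. Your approach is more elementary and self-contained linear algebra; the paper's approach has the advantage of exhibiting the modules as summands of modules induced from the Sylow $p$-subgroup, which fits the induction machinery used throughout the rest of the paper.
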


\begin{proof}
Obviously, $P=\{1, a^t, a^{2t}, \cdots, a^{(p^s-1)t}\}$ is a Sylow
$p$-subgroup and we can choose $\{1, a, a^2, \cdots, a^{t-1}\}$ as a
set of the coset representatives of $P$ in $G$.

Following Lemma \ref{lemwellknown}, $kP$ has exactly $p^s$
non-isomorphic indecomposable representations, which are
represented by the Jordan matrices
\begin{center}
  $A_r:=\rho_r(a^t)=\left(\begin{tabular}{cccccc}
$1$&$1$&$0$&$\cdots$&$0$&$0$\\
$0$&$1$&$1$&$\cdots$&$0$&$0$\\
$0$&$0$&$1$&$\cdots$&$0$&$0$\\
$\cdots$&$\cdots$&$\cdots$&$\cdots$&$\cdots$&$\cdots$\\
$0$&$0$&$0$&$\cdots$&$1$&$1$\\
$0$&$0$&$0$&$\cdots$&$0$&$1$\\
 \end{tabular}\right)_{r\times r}$\ , $1\leqslant r\leqslant p^s$.
\end{center}

For $1\leqslant r\leqslant p^s$, let $\Omega_{r, t}$ be the matrix
representation of $kC_n$ induced from $\rho_r$. Then we have

\begin{center}
\begin{tabular}{ccl}
 $\Omega_{r, t}(a)$&=&$\left(\begin{tabular}{cccc}
$\rho_r(1\cdot a\cdot1)$&$\rho_r(1\cdot a\cdot a)$ &$\cdots$ &$\rho_r(1\cdot a\cdot a^{t-1})$\\
$\rho_r(a^{-1}\cdot a\cdot1)$&$\rho_r(a^{-1}\cdot a\cdot a)$ &$\cdots$ &$\rho_r(a^{-1}\cdot a\cdot a^{t-1})$\\
$\cdots$&$\cdots$&$\cdots$&$\cdots$\\
$\rho_r(a^{-(t-1)}\cdot a\cdot1)$&$\rho_r(a^{-(t-1)}\cdot a\cdot a)$ &$\cdots$& $\rho_r(a^{-(t-1)}\cdot a\cdot a^{t-1})$\\
 \end{tabular}\right)$\\
 &=&$\left(\begin{tabular}{cccccc}
$0$      &   $0$      &0&$\cdots$   &$0$      &$A_r$\\
$I_r$    &   $0$      &0&$\cdots$   &$0$      &$0$\\
$0$      &   $I_r$    &0&$\cdots$   &$0$      &$0$\\
$\cdots$ &$\cdots$&$\cdots$&$\cdots$&$\cdots$&$\cdots$\\
$0$      &   $0$&0          &$\cdots$   &$0$      &$0$\\
$0$      &   $0$&0          &$\cdots$   &$I_r$    &$0$\\
\end{tabular}\right)$\ ,
\end{tabular}
\end{center}
where $I_r$ is an $r\times r$ identity matrix. We write
$B=\Omega_{r, t}(a)$.

Now we compute the Jordan canonical form of matrix $B$. To do so, we
have to first determine the characteristic polynomial of the matrix $B$.

Since $|\lambda I-B|=(\lambda-1)^r(\lambda-\xi)^r
(\lambda-\xi^2)^r\cdots(\lambda-\xi^{t-1})^r$, the matrix $B$ has
$t$ distinct eigenvalues $1, \xi, \xi^2, \cdots, \xi^{t-1}$. It
follows that the Jordan canonical form of $B$ contains $t$ Jordan
segments, and each segment is composed of Jordan blocks
corresponding to the same eigenvalue. It is obvious that every
Jordan segment is an $r\times r$-matrix.

By Lemma \ref{number}, we know that every Jordan segment contains
only one Jordan block, which has the following form $\rho_{r,i}(a)$,
where $0\leqslant i \leqslant t-1$. This completes the proof.
\end{proof}

Let $D_n=\langle a,b|a^n=1, b^2=1, (ab)^2=1\rangle=\{1, a, \cdots,
a^{n-1}, b, ba, \cdots, ba^{n-1}\}$ be the Dihedral group of order
$2n$. Then the Sylow $p$-subgroups of $D_n$ have order $p^s$. The
cyclic subgroup $C_n=\{1, a, \cdots, a^{n-1}\}$ is a normal subgroup
of $D_n$. Let $P$ be the subgroup of $C_n$ generated by $a^t$. Then
$P=\{1, a^t, \cdots, a^{(p^s-1)t}\}$ is a normal Sylow $p$-subgroup
of $D_n$.

Suppose that $n=p^st$ is odd. Then the conjugate classes of $D_n$
are $\{1\},\ \{a^i, a^{n-i}\}\ (1\leqslant
i\leqslant\frac{n-1}{2}),\ \{a^jb|0\leqslant j\leqslant n-1\}$ with
representatives $1,\ a^i\ (1\leqslant i\leqslant\frac{n-1}{2}),\ b,$
where $1$, $b$, $a^{p^s}$, $a^{2p^s}$, $\cdots$,
$a^{\frac{t-1}{2}p^s}$ are $p$-regular.

Suppose that $n=p^st$ is even. Then the conjugate classes of $D_n$
are $\{1\},\ \{a^i, a^{n-i}\}\ (1\leqslant i\leqslant\frac{n}{2}),\
\{a^{2i}b|0\leq i \leq\frac{n-2}{2}\},\ \{a^{2i+1}b|0\leqslant i
\leqslant\frac{n-2}{2}\}$ with representatives $1,\ a^i\ (1\leqslant
i\leqslant\frac{n}{2}),\ b,\ ab,$ where $1, b, ab, a^{p^s},
a^{2p^s}, \cdots, a^{\frac{t}{2}p^s}$ are $p$-regular.

Thus by Theorem \ref{higman} and Theorem \ref{berman},
we have the following theorem.

\begin{theo}
The group algebra $kD_n$ is of finite representation type. If
$n=p^st$ is odd, then $kD_n$ has $\frac{t+3}{2}p^s$ indecomposable
modules. If $n=p^st$ is even, then $kD_n$ has $\frac{t+6}{2}p^s$
indecomposable modules.
\end{theo}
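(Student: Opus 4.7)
The plan is to reduce the statement directly to Theorems~\ref{higman} and~\ref{berman}. The cyclic subgroup $P = \langle a^t \rangle$ of $D_n$ has order $p^s$, which equals the $p$-part of $|D_n| = 2p^st$ because $p$ is odd, so $P$ is a Sylow $p$-subgroup of $D_n$. Since $P \subseteq C_n$ and conjugation by $b$ sends $a^t$ to $a^{-t}$, which is still a power of $a^t$, the subgroup $P$ is normal in $D_n$. Theorem~\ref{higman} then immediately gives that $kD_n$ is of finite representation type, while Theorem~\ref{berman} reduces the enumeration of indecomposable modules to computing $|P| \cdot r = p^s r$, where $r$ is the number of $p$-regular conjugate classes of $D_n$.

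It remains to read off $r$ from the conjugacy information assembled just before the statement. When $n = p^st$ is odd, the $p$-regular representatives are $1$, $b$, and $a^{i p^s}$ for $1 \leq i \leq \frac{t-1}{2}$; each of these has order dividing $\operatorname{lcm}(2, t)$, which is coprime to $p$, so they are genuinely $p$-regular. Counting gives
$$r = 2 + \frac{t-1}{2} = \frac{t+3}{2},$$
and multiplying by $|P| = p^s$ yields the claimed $\frac{t+3}{2} p^s$ indecomposable modules. When $n$ is even, the reflections split into the two classes $\{a^{2i}b\}$ and $\{a^{2i+1}b\}$, so the $p$-regular representatives become $1$, $b$, $ab$, and $a^{ip^s}$ for $1 \leq i \leq \frac{t}{2}$, giving
$$r = 3 + \frac{t}{2} = \frac{t+6}{2}$$
and hence $\frac{t+6}{2} p^s$ indecomposable modules.

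The only subtle point is verifying the conjugacy class information used above: one must check that conjugation by $a$ and by $b$ sends $a^j$ into $\{a^j, a^{-j}\}$, that the listed $a^{ip^s}$ really exhaust the $p$-regular classes among powers of $a$ (using that the $p$-regular part of $\langle a \rangle$ is exactly $\langle a^{p^s} \rangle$ of order $t$), and that the reflections form one class when $n$ is odd but two classes when $n$ is even. These are standard dihedral computations and present no real obstacle beyond careful bookkeeping of parities; once they are in place, the theorem is a one-line application of Theorem~\ref{berman}.
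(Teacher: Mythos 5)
Your proposal is correct and follows essentially the same route as the paper: the paper also lists the conjugacy classes and $p$-regular representatives of $D_n$ just before the statement and then invokes Theorem~\ref{higman} and Theorem~\ref{berman}, with the normal cyclic Sylow $p$-subgroup $P=\langle a^t\rangle$ and the counts $r=\frac{t+3}{2}$ (odd $n$) and $r=\frac{t+6}{2}$ (even $n$). Your write-up merely makes explicit the bookkeeping (normality of $P$, $p$-regularity checks) that the paper leaves implicit.
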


By Theorem \ref{prrep}, $kC_n$ has $n$ indecomposable
representations $A_{r,i}:=\rho_{r, i}(a)$, where $1\leqslant
r\leqslant p^s$ and $0\leqslant i\leqslant t-1$. Since $\{1, b\}$ is
a set of left coset representatives of $C_n$ in $D_n$, the induced
representation $\Omega_{2r, i}:=\rho_{r, i}\!\!\uparrow^{D_n}$ is
given by
\begin{center}
\begin{tabular}{cc}
 $\Omega_{2r, i}(a)=\left(\begin{tabular}{cccc}
$A_{r, i}$&$0$\\
$0$&$A_{r, i}^{-1}$
 \end{tabular}\right)_{2r\times 2r}, $&
 $\Omega_{2r, i}(b)=\left(\begin{tabular}{cccc}
$0$&$I_r$\\
$I_r$&$0$
 \end{tabular}\right)_{2r\times 2r}$\ .
 \end{tabular}
\end{center}

\begin{lem}
With the notations above, the inverse $A_{r, i}^{-1}$ of $A_{r, i}$
is equal to
\begin{center}
\begin{tabular}{c}
 $\left(\begin{tabular}{ccccccc}
$\xi^{-i}$&$-\xi^{-2i}$&$\xi^{-3i}$&$\dots$&
$(-1)^{r-3}\xi^{-(r-2)i}$&$(-1)^{r-2}\xi^{-(r-1)i}$&$(-1)^{r-1}\xi^{-ri}$\\
$0$&$\xi^{-i}$&$-\xi^{-2i}$&$\dots$&
$(-1)^{r-4}\xi^{-(r-3)i}$&$(-1)^{r-3}\xi^{-(r-2)i}$&$(-1)^{r-2}\xi^{-(r-1)i}$\\
$0$&$0$&$\xi^{-i}$&$\dots$&
$(-1)^{r-5}\xi^{-(r-4)i}$&$(-1)^{r-4}\xi^{-(r-3)i}$&$(-1)^{r-3}\xi^{-(r-2)i}$\\
$\cdots$&$\cdots$&$\cdots$&$\cdots$&$\cdots$&$\cdots$&$\cdots$\\
$0$&$0$&$0$&$\cdots$&$\xi^{-i}$&$-\xi^{-2i}$&$\xi^{-3i}$\\
$0$&$0$&$0$&$\cdots$&$0$&$\xi^{-i}$&$-\xi^{-2i}$\\
$0$&$0$&$0$&$\cdots$&$0$&$0$&$\xi^{-i}$
 \end{tabular}\right)$
 \end{tabular}
\end{center}
\end{lem}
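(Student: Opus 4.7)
The plan is to avoid the direct but tedious verification by multiplication, and instead exploit the Jordan structure via a truncated geometric series.

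First, I would write $A_{r,i} = \xi^i I_r + N$, where $N$ is the $r \times r$ nilpotent matrix with $1$'s on the first superdiagonal and zeros elsewhere. A standard (and easily-verified) observation is that $N^k$ has $1$'s on the $k$-th superdiagonal and zeros elsewhere; in particular $N^r = 0$.

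Next, since $\xi^i \neq 0$, I factor out $\xi^i$ and use the finite geometric series that terminates because $N$ is nilpotent:
\begin{equation*}
A_{r,i}^{-1} = \xi^{-i}\bigl(I_r + \xi^{-i}N\bigr)^{-1}
= \xi^{-i}\sum_{k=0}^{r-1}(-1)^k \xi^{-ki} N^k
= \sum_{k=0}^{r-1}(-1)^k \xi^{-(k+1)i} N^k.
\end{equation*}
Because $N^k$ is supported on the $k$-th superdiagonal, the $(j, j+k)$-entry of $A_{r,i}^{-1}$ is exactly $(-1)^k \xi^{-(k+1)i}$ for $0 \leq k \leq r-1$, and all other entries are $0$. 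Reading off $k = 0, 1, 2, \dots, r-1$ along a row recovers the pattern $\xi^{-i},\ -\xi^{-2i},\ \xi^{-3i},\ \dots,\ (-1)^{r-1}\xi^{-ri}$ displayed in the statement of the lemma.

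Finally, I would verify correctness by a one-line check: multiplying the series expression on the left by $\xi^i I_r + N$ telescopes,
\begin{equation*}
(\xi^i I_r + N)\sum_{k=0}^{r-1}(-1)^k \xi^{-(k+1)i} N^k
= \sum_{k=0}^{r-1}(-1)^k \xi^{-ki}N^k + \sum_{k=0}^{r-1}(-1)^k \xi^{-(k+1)i} N^{k+1}
= I_r + (-1)^{r-1}\xi^{-ri}N^r = I_r,
\end{equation*}
since $N^r=0$. I do not expect any real obstacle here; the only thing to be careful about is the bookkeeping of signs and of the exponent shift $\xi^{-(k+1)i}$ versus $\xi^{-ki}$, which is what the factor $\xi^{-i}$ pulled out at the beginning provides.
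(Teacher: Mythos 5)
Your proof is correct, and it is essentially the argument the paper has in mind: the paper's own ``proof'' consists only of the sentence ``It follows from a straightforward computation,'' so there is no competing method to compare against. Writing $A_{r,i}=\xi^i I_r+N$ with $N$ the nilpotent superdiagonal shift and inverting via the terminating geometric series $\xi^{-i}\sum_{k=0}^{r-1}(-1)^k\xi^{-ki}N^k$ is a clean and complete way to carry out that computation, and your telescoping check $(\xi^i I_r+N)\sum_{k=0}^{r-1}(-1)^k\xi^{-(k+1)i}N^k=I_r$ settles it; the entries read off from the superdiagonals of $N^k$ match the displayed matrix exactly.
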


\begin{proof}
It follows from a straightforward computation.
\end{proof}

\begin{lem}\label{lem1}
Suppose that $n$ is odd. Let $1\leqslant r\leqslant p^s$. Then we
have:

(1) The matrix $A_{r, i}$ is not similar to $A_{r, i}^{-1}$ for all
$1\leqslant i\leqslant t-1$.

(2) The matrix $A_{r, 0}$ is similar to $A_{r, 0}^{-1}$.
\end{lem}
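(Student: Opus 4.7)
The plan is to reduce everything to Jordan canonical form, observing that $A_{r,i}$ is a single Jordan block of size $r$ with eigenvalue $\xi^i$ on the diagonal, while by the preceding lemma $A_{r,i}^{-1}$ is upper triangular with $\xi^{-i}$ on the diagonal (and $-\xi^{-2i}$ on the first superdiagonal). Thus the spectrum of $A_{r,i}$ is $\{\xi^i\}$ with multiplicity $r$ and the spectrum of $A_{r,i}^{-1}$ is $\{\xi^{-i}\}$ with multiplicity $r$.

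For part (1), I would use only that similar matrices share the same spectrum. Hence $A_{r,i}\sim A_{r,i}^{-1}$ forces $\xi^i=\xi^{-i}$, i.e.\ $\xi^{2i}=1$, i.e.\ $t\mid 2i$. Here comes the only place the hypothesis is used: since $n=p^st$ is odd, $t$ is odd, so $t\mid 2i$ is equivalent to $t\mid i$. For $1\le i\le t-1$ this is impossible, proving~(1).

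For part (2), the eigenvalues coincide trivially, so I only need to verify that $A_{r,0}^{-1}$ has Jordan form a single block of size $r$. The cleanest route is to write $A_{r,0}=I+N$ where $N$ is the standard nilpotent superdiagonal matrix with $N^{r-1}\ne 0$ and $N^r=0$. Then
\[
A_{r,0}^{-1}-I \;=\; -N\,A_{r,0}^{-1},\qquad (A_{r,0}^{-1}-I)^k \;=\; (-N)^k\,A_{r,0}^{-k}.
\]
Since $A_{r,0}^{-1}$ is invertible, the relation $N^{r-1}\ne 0$, $N^r=0$ passes to $A_{r,0}^{-1}-I$, so the minimal polynomial of $A_{r,0}^{-1}$ is $(\lambda-1)^r$. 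For an $r\times r$ matrix with single eigenvalue $1$ this forces Jordan type a single block $J_r(1)$, which is exactly $A_{r,0}$. (Alternatively one could just check directly from the explicit formula that the first superdiagonal of $A_{r,0}^{-1}-I$ is $(-1,-1,\ldots,-1)$, giving $\operatorname{rank}(A_{r,0}^{-1}-I)=r-1$, and conclude that the geometric multiplicity of the eigenvalue $1$ equals $1$.)

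There is no real obstacle here; the argument is an exercise in linear algebra. The only point worth flagging is the small arithmetic step where oddness of $n=p^st$ is used to upgrade $t\mid 2i$ to $t\mid i$, which is precisely why part~(1) fails for the even case discussed later in the paper.
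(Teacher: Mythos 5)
Your proof is correct, and part (1) is essentially the paper's argument (the paper just says the eigenvalues differ; you usefully spell out that $\xi^i=\xi^{-i}$ would force $t\mid 2i$, which fails because $t$ is odd when $n$ is). For part (2), however, you take a genuinely different route: you show abstractly that $A_{r,0}^{-1}$ has minimal polynomial $(\lambda-1)^r$ (or geometric multiplicity $1$ for the eigenvalue $1$), hence Jordan type a single block $J_r(1)=A_{r,0}$, whereas the paper exhibits an explicit upper triangular matrix $T$, defined recursively by $t_{rr}=1$, $t_{ii}=-t_{i+1,i+1}$, $t_{ij}=-t_{i+1,j}-t_{i+1,j+1}$, satisfying $A_{r,0}TA_{r,0}=T$, i.e.\ $TA_{r,0}T^{-1}=A_{r,0}^{-1}$. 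Your argument is shorter and more conceptual, and it fully proves the similarity claimed in the lemma; what it does not deliver is the concrete conjugating matrix, and that is precisely what the paper exploits afterwards: Lemma \ref{T2=T} shows $T^2=I$, and Theorem \ref{induced} uses this involution to define the extended representations $\Phi_{r,0}(b)=T$ and $\Phi_{r,0}'(b)=-T$ of $kD_n$. So if one adopted your proof, one would still need a separate construction (or an additional argument that the conjugating matrix can be chosen to be an involution) before the later results go through; the paper's explicit $T$ buys that for free, at the cost of a less transparent verification.
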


\begin{proof}
(1) It follows from the fact that the eigenvalues of the two
matrices are not equal.

(2) Let $T=(t_{ij})\in M_r(k)$ be an upper triangular matrix defined
by $t_{rr}=1$ and the relations $t_{ij}=-t_{i+1,j}-t_{i+1,j+1}$ and
$t_{ii}=-t_{i+1,i+1}$ for all $1\leqslant i<j$. Then it is easy to
check that $A_{r, 0}TA_{r, 0}=T$.
\end{proof}

\begin{lem}\label{label2}
Suppose that $n$ is even. Let $1\leqslant r\leqslant p^s$ and
$0\leqslant i\leqslant t-1$. Then we have:

(1) The matrix $A_{r, i}$ is not similar to $A_{r, i}^{-1}$ if $i\neq 0$, $\frac{t}{2}$.

(2) The matrix $A_{r, i}$ is similar to $A_{r, i}^{-1}$ if $i=0$ or $\frac{t}{2}$.
\end{lem}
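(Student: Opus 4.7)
The strategy is to parallel the proof of Lemma \ref{lem1}, but observing that the hypothesis $n$ even (together with $p$ odd) forces $t$ to be even, so $t/2$ is a legitimate integer index and $\xi^{t/2}$ is a square root of unity distinct from $1$, hence equal to $-1$.

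For part (1), I would invoke the eigenvalue obstruction exactly as in Lemma \ref{lem1}(1). The matrix $A_{r,i}$ is upper triangular with $\xi^i$ on the diagonal, and $A_{r,i}^{-1}$ is upper triangular with $\xi^{-i}$ on the diagonal. These two spectra coincide iff $\xi^{2i}=1$, iff $2i\equiv 0\pmod t$, iff $i\in\{0,t/2\}$. So for $i\notin\{0,t/2\}$ the two matrices have disjoint eigenvalues and cannot be similar.

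For part (2), the case $i=0$ is handled verbatim by the explicit upper triangular $T$ exhibited in Lemma \ref{lem1}(2); that construction nowhere uses the parity of $n$. The only substantive case is $i=t/2$, where $\xi^{t/2}=-1$ and hence $A_{r,t/2}=-I+N$ with $N$ the $r\times r$ nilpotent shift. Writing $-I+N=-(I-N)$ and using the nilpotence of $N$, one obtains
\[ A_{r,t/2}^{-1} \;=\; -\bigl(I+N+N^{2}+\cdots+N^{r-1}\bigr), \]
so that $A_{r,t/2}^{-1}+I=-N\bigl(I+N+\cdots+N^{r-2}\bigr)$. Since the bracketed factor is upper unitriangular (hence invertible), this matrix has the same rank as $N$, namely $r-1$. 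Therefore $-1$ is the only eigenvalue of $A_{r,t/2}^{-1}$ and its geometric multiplicity is exactly $1$, which forces the Jordan canonical form of $A_{r,t/2}^{-1}$ to be the single block $J_r(-1)=A_{r,t/2}$. Similarity follows.

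The only non-routine step is confirming the Jordan structure of $A_{r,t/2}^{-1}$ in the $i=t/2$ case. An alternative route, closer in spirit to Lemma \ref{lem1}(2), would be to write down an invertible $T$ satisfying $A_{r,t/2}\,T\,A_{r,t/2}=T$ by a recursive specification of entries; but the Jordan-form computation above is shorter and makes the role of $\xi^{t/2}=-1$ transparent.
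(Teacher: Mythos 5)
Your proposal is correct, and for part (1) and the $i=0$ case of part (2) it coincides with the paper's argument (eigenvalue comparison, and reuse of the matrix $T$ from Lemma \ref{lem1}(2)). The difference is in the case $i=\tfrac{t}{2}$: you first observe (correctly, since $p$ odd and $n$ even force $t$ even and $\xi^{t/2}=-1$ in odd characteristic) that $A_{r,t/2}=-I+N$ with $N$ the nilpotent shift, and then prove similarity non-constructively, by computing $A_{r,t/2}^{-1}+I=-N(I+N+\cdots+N^{r-2})$, reading off that $-1$ is the unique eigenvalue with geometric multiplicity $1$, and concluding that the Jordan form of $A_{r,t/2}^{-1}$ is the single block $J_r(-1)=A_{r,t/2}$. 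The paper instead exhibits an explicit upper triangular matrix $T_1$, defined recursively by $t_{rr}=1$, $t_{ij}=-\xi^{t/2}t_{i+1,j}-t_{i+1,j+1}$, $t_{ii}=-t_{i+1,i+1}$, and verifies $A_{r,t/2}T_1A_{r,t/2}=T_1$. Your route is shorter and makes the role of $\xi^{t/2}=-1$ transparent (and would equally dispose of the $i=0$ case), but it only yields existence of a conjugating matrix; the paper's explicit $T_1$ is not incidental, since Lemma \ref{T2=T} proves $T_1^2=I$ and Theorem \ref{Theven} uses $T_1$ concretely to define the extended representation $\Phi_{r,\frac{t}{2}}(b)=T_1$. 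So as a proof of Lemma \ref{label2} alone your argument is complete; within the paper's overall construction one still needs an explicit involutive intertwiner, which your Jordan-form argument does not by itself provide.
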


\begin{proof}
(1) It is similar to the proof of Lemma \ref{lem1}(1).

(2) When $i=0$, the proof is similar to that of Lemma \ref{lem1}(2).
When $i=\frac{t}{2}$, let $T_1=(t_{ij})\in M_r(k)$ be an upper
triangular matrix defined by $t_{rr}=1$ and the relations
$t_{ij}=-\xi^{\frac{t}{2}}t_{i+1,j}-t_{i+1,j+1}$ and
$t_{ii}=-t_{i+1,i+1}$ for all $1\leqslant i<j$. Then one can easily
check that $A_{r, \frac{t}{2}}T_1A_{r, \frac{t}{2}}=T_1$.
\end{proof}

\begin{lem}\label{T2=T}
Let $T$ and $T_1$ be the matrices defined respectively in the proof
of Lemma \ref{lem1} and Lemma \ref{label2}. Then $T^2=I$ and
$T_1^2=I$, where $I$ is an identity matrix.
\end{lem}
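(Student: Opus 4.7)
The plan is to obtain an explicit closed form for the entries of $T$ and $T_1$ and then verify $T^2=I$ and $T_1^2=I$ by a single standard binomial identity.

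First I would conjecture (and verify by downward induction on $i$) that for $i\le j$ the entries are
\[
T:\quad t_{ij}=(-1)^{r-i}\binom{r-i}{j-i},\qquad T_1:\quad t_{ij}=(-1)^{r-i}\binom{r-i}{j-i}\,\xi^{(t/2)(j-i)},
\]
and $t_{ij}=0$ for $i>j$. The base case $i=r$ gives $t_{rr}=1$, as required. For the inductive step one pulls out the common sign $(-1)^{r-i}$ (and, for $T_1$, the factor $\xi^{(t/2)(j-i)}$), so that the defining recursion $t_{ij}=-t_{i+1,j}-t_{i+1,j+1}$ for $T$ (resp.\ $t_{ij}=-\xi^{t/2}t_{i+1,j}-t_{i+1,j+1}$ for $T_1$) reduces to Pascal's identity $\binom{r-i-1}{j-i-1}+\binom{r-i-1}{j-i}=\binom{r-i}{j-i}$. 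The extra $\xi^{t/2}$ in front of $t_{i+1,j}$ is precisely what is needed to make the exponent of $\xi^{t/2}$ match. The boundary cases $j=i$ (diagonal) and $j=r$ (with the convention $t_{i+1,r+1}=0$) are handled by the same formula.

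Second, since both matrices are upper triangular, for $i\le j$ I would compute
\[
(T^2)_{ij}=\sum_{k=i}^{j}t_{ik}t_{kj}=(-1)^{r-i}\sum_{k=i}^{j}(-1)^{r-k}\binom{r-i}{k-i}\binom{r-k}{j-k}.
\]
Applying the identity $\binom{r-i}{k-i}\binom{r-k}{j-k}=\binom{r-i}{j-i}\binom{j-i}{k-i}$ and substituting $l=k-i$, $m=j-i$ yields
\[
(T^2)_{ij}=\binom{r-i}{m}\sum_{l=0}^{m}(-1)^{l}\binom{m}{l}=\binom{r-i}{m}(1-1)^{m},
\]
which equals $1$ when $i=j$ and $0$ when $i<j$, so $T^2=I$. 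For $T_1^2$ the additional factor $\xi^{(t/2)(j-i)}$ pulls out of the sum and equals $1$ on the diagonal, so the identical calculation yields $T_1^2=I$.

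Since $(1-1)^m=0$ for $m\ge 1$ holds in every characteristic, the argument goes through over $k$ in odd characteristic $p$. The only non-routine step is spotting the closed form for $t_{ij}$; once that is in hand everything is routine binomial manipulation, so I do not anticipate any serious obstacle.
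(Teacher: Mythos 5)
Your proof is correct, but it takes a genuinely different route from the paper. You first identify a closed form for the entries, $t_{ij}=(-1)^{r-i}\binom{r-i}{j-i}$ for $T$ and $t_{ij}=(-1)^{r-i}\binom{r-i}{j-i}\xi^{(t/2)(j-i)}$ for $T_1$ (so $T$ is essentially a signed Pascal matrix), verify it by downward induction via Pascal's rule (with the boundary convention $t_{i+1,r+1}=0$, which you correctly flag), and then get $T^2=I$ from the trinomial revision $\binom{r-i}{k-i}\binom{r-k}{j-k}=\binom{r-i}{j-i}\binom{j-i}{k-i}$ together with $\sum_{l=0}^{m}(-1)^l\binom{m}{l}=0$ for $m\geqslant 1$; since these are integer identities they survive reduction mod $p$, as you note. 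The paper instead never writes down the entries explicitly: it proves $T^2=I$ by a direct induction on the rows of $T^2$, expanding $b_{il}=\sum_k t_{ik}t_{kl}$ and telescoping with the defining recursion $t_{ij}=-t_{i+1,j}-t_{i+1,j+1}$, and then declares the $T_1$ case ``similar.'' Your approach costs the extra step of guessing the closed form, but it buys a uniform treatment of $T$ and $T_1$ (the factor $\xi^{(t/2)(j-i)}$ simply pulls out of every product along a diagonal), a shorter and more transparent verification, and explicit matrix entries that could also be reused to check the similarity relations $A_{r,0}TA_{r,0}=T$ and $A_{r,t/2}T_1A_{r,t/2}=T_1$ of Lemmas \ref{lem1} and \ref{label2}; the paper's induction avoids any guessing but is a longer bare-hands manipulation. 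No gaps to report.
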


\begin{proof}
We only check that $T^2=I$ since the proof for $T_1^2=I$ is similar.
Let $T^2=(b_{il})_{r\times r}$. Clearly,
$T^2$ is an upper triangular matrix and $b_{ii}=1$. We use induction
on rows to show that $b_{il}=0$ for all $i<l$.

At first, we have
$$b_{r-1, r}=t_{r-1, r-1}t_{r-1, r}+t_{r-1, r}t_{r,
r}= t_{r-1, r}(t_{r-1, r-1}+t_{r, r})=0.$$
Next, let $i<r-1$ and suppose that $b_{jl}=0$ for all $i<j< l\leqslant r$.
Then we have $b_{i,i+1}=t_{i,i}t_{i,i+1}+t_{i,i+1}t_{i+1,i+1}=0$, and
for $i+1<l$,

\begin{eqnarray*}
b_{il}&=&\sum_{k=i}^lt_{ik}t_{kl}=t_{ii}t_{il}+\sum_{k=i+1}^lt_{ik}t_{kl}=t_{ii}t_{il}+\sum_{k=i+1}^l(-t_{i+1, k}-t_{i+1, k+1})t_{kl}\\
&=&t_{ii}t_{il}-\sum_{k=i+1}^lt_{i+1, k}t_{kl}-\sum_{k=i+1}^lt_{i+1,
k+1}t_{kl}=t_{ii}t_{il}-b_{i+1, l}-\sum_{k=i+1}^lt_{i+1, k+1}t_{kl}\\
&=&t_{ii}t_{il}-\sum_{k=i+1}^lt_{i+1, k+1}(-t_{k+1, l}-t_{k+1, l+1})\\
&=&t_{ii}t_{il}+\sum_{k=i+1}^lt_{i+1, k+1}t_{k+1, l}+\sum_{k=i+1}^lt_{i+1, k+1}t_{k+1, l+1}\\
&=&t_{ii}t_{il}+\sum_{k=i+1}^lt_{i+1, k}t_{k, l}-t_{i+1, i+1}t_{i+1,
l}
+\sum_{k=i+1}^{l+1}t_{i+1, k}t_{k, l+1}-t_{i+1, i+1}t_{i+1, l+1}\\
&=&t_{ii}t_{il}+b_{i+1, l}-t_{i+1, i+1}t_{i+1, l}
+b_{i+1, l+1}-t_{i+1, i+1}t_{i+1, l+1}\\
&=&t_{ii}t_{il}-t_{i+1, i+1}t_{i+1, l}
-t_{i+1, i+1}t_{i+1, l+1}\\
&=&-t_{i+1, i+1}(t_{i l}+t_{i+1, l}+t_{i+1, l+1})=0.
\end{eqnarray*}
This shows that $T^2=I$.
\end{proof}

\begin{lem}\cite{huppert}\label{quention}
If $H\lhd G$ with $G$ a finite group and $W$ is a $kG$-module, then
$$W\downarrow_H\uparrow^G\cong k(G/H)\otimes W$$
where the $kG$-module structure of $k(G/H)$ is given by $g_2\cdot
(g_1H)=g_2g_1H$.
\end{lem}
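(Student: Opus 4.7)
The plan is to produce an explicit $kG$-linear isomorphism and verify it directly. Write $W\!\!\downarrow_H\!\!\uparrow^G = kG\otimes_{kH}W$, and put the diagonal $kG$-action on $k(G/H)\otimes W$, namely $g\cdot(xH\otimes w)=gxH\otimes gw$, which makes sense because $W$ is an honest $kG$-module (not merely a $kH$-module).

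First, I would define a $k$-linear map
$$\phi:kG\otimes_{kH}W\longrightarrow k(G/H)\otimes W,\qquad \phi(g\otimes w)=gH\otimes gw.$$
To see that $\phi$ descends to the tensor product over $kH$, pick $h\in H$ and compute
$$\phi(gh\otimes w)=ghH\otimes ghw=gH\otimes g(hw)=\phi(g\otimes hw),$$
using $ghH=gH$ and the fact that the $kG$-action on $W$ restricts to the $kH$-action on the right-hand factor of the balanced tensor. This is the only nontrivial well-definedness check.

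Next I would verify that $\phi$ is $kG$-linear:
$$\phi(g'\cdot(g\otimes w))=\phi(g'g\otimes w)=g'gH\otimes g'gw=g'\cdot(gH\otimes gw)=g'\cdot\phi(g\otimes w).$$
For bijectivity, I would exhibit coset representatives $g_1,\ldots,g_r$ of $H$ in $G$, write any element of $kG\otimes_{kH}W$ uniquely as $\sum_i g_i\otimes w_i$, and note that
$$\phi\Bigl(\sum_i g_i\otimes w_i\Bigr)=\sum_i g_iH\otimes g_iw_i.$$
Since $\{g_iH\}$ is a $k$-basis of $k(G/H)$, vanishing of the image forces $g_iw_i=0$ for every $i$, whence $w_i=0$ because each $g_i$ acts invertibly on $W$; this gives injectivity. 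Combined with the dimension count
$$\dim_k(kG\otimes_{kH}W)=[G:H]\dim_k W=\dim_k(k(G/H)\otimes W),$$
injectivity yields surjectivity, so $\phi$ is a $kG$-module isomorphism.

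There is no serious obstacle here; the only subtle point is the bookkeeping of the two different $kG$-actions on the right-hand side (the permutation action on $k(G/H)$ and the given action on $W$, combined diagonally), and keeping the roles of the tensor product over $kH$ versus over $k$ straight when verifying well-definedness. Everything else reduces to a short direct calculation, so in the writeup I would simply display $\phi$, the two verifications above, and the dimension argument.
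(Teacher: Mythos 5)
Your argument is correct. Note that the paper does not prove this lemma at all: it is quoted from Huppert--Blackburn \cite{huppert}, so there is no internal proof to compare with; your explicit map $\phi(g\otimes w)=gH\otimes gw$ (with inverse $gH\otimes w\mapsto g\otimes g^{-1}w$), together with the $kH$-balancedness check, $kG$-linearity, and the coset-basis/dimension argument, is exactly the standard verification and fills that gap cleanly. One small remark: normality of $H$ is never used in your proof, and indeed the isomorphism $W\!\!\downarrow_H\uparrow^G\cong k(G/H)\otimes W$ (a special case of the projection formula) holds for an arbitrary subgroup $H\leq G$; the hypothesis $H\lhd G$ in the statement is only relevant to how the lemma is applied later in the paper.
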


\begin{theo}\cite{willems}\label{willems}
Let $H$ be a normal subgroup of a finite group $G$ such that $G/H$ is a cyclic
$p'$-group. Let $V$ be an indecomposable $kH$-module with the
inertia group $T(V)=G$. Then there exists a $kG$-module $W$ such
that $W\!\!\downarrow_H\cong V$.
\end{theo}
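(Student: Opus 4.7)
My plan is to put a $kG$-module structure on $V$ itself, extending the given $kH$-action $\rho$. Set $m:=[G:H]$; since $G/H$ is cyclic, fix $g\in G$ whose image generates $G/H$, so $g^m\in H$ and every element of $G$ factors uniquely as $g^ih$ with $0\le i<m$ and $h\in H$. To provide such an extension it suffices to produce a $k$-linear bijection $\pi:V\to V$ satisfying (i) $\pi\,\rho(h)=\rho(ghg^{-1})\,\pi$ for every $h\in H$, and (ii) $\pi^m=\rho(g^m)$. Then $(g^ih)\cdot v:=\pi^i\rho(h)(v)$ defines $W:=V$ as a $kG$-module (the associativity check is a routine consequence of (i) and (ii)), and clearly $W\!\!\downarrow_H\cong V$.

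Since $T(V)=G$, there exists a $k$-linear isomorphism $\phi:V\to V$ satisfying (i); this is the statement ${}^gV\cong V$ as $kH$-modules. Set $E:=\mathrm{End}_{kH}(V)$ and let $\sigma\in\mathrm{Aut}_k(E)$ be $\sigma(\alpha):=\phi\alpha\phi^{-1}$; condition (i) makes $\sigma$ preserve $E$. The element $u:=\phi^m\rho(g^m)^{-1}$ lies in $E^*$, because both $\phi^m$ and $\rho(g^m)$ twist the $H$-action by the same inner automorphism of $H$ given by $g^m$; moreover, applying (i) with $h=g^m$ yields $\phi\rho(g^m)=\rho(g^m)\phi$, from which $\sigma(u)=u$ follows. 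I look for $\pi$ in the form $\pi:=\theta\phi$ with $\theta\in E^*$, which automatically preserves (i) since $\theta$ is $kH$-linear. A direct expansion gives $(\theta\phi)^m=\theta\,\sigma(\theta)\cdots\sigma^{m-1}(\theta)\,\phi^m$, so (ii) becomes the twisted norm equation $N_\sigma(\theta):=\theta\,\sigma(\theta)\cdots\sigma^{m-1}(\theta)=u^{-1}$.

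The main step is solving this norm equation. Because $V$ is indecomposable and $k$ is algebraically closed, $E$ is a finite-dimensional local $k$-algebra with residue field $k$; write $E=k\cdot 1\oplus J$ with $J$ the nilpotent Jacobson radical. The induced action of $\sigma$ on $E/J=k$ is trivial, so decomposing $u=c(1+j)$ with $c\in k^*$ and $j\in J$, the identity $\sigma(u)=u$ forces $\sigma(j)=j$. Restricting the search to $\sigma$-fixed $\theta$ collapses $N_\sigma(\theta)$ to $\theta^m$, so it suffices to extract an $m$-th root of $u^{-1}$ in $E$. This is where both hypotheses enter: because $p\nmid m$, the scalar $1/m$ lies in $k$ and the binomial series $(1+j)^{-1/m}=\sum_{r\ge 0}\binom{-1/m}{r}j^r$ makes sense in $E$ (terminating by nilpotency of $J$); because $k$ is algebraically closed, one may also choose an $m$-th root $c^{-1/m}\in k^*$. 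Then $\theta:=c^{-1/m}(1+j)^{-1/m}$ is $\sigma$-fixed and satisfies $\theta^m=u^{-1}$, completing the construction of $\pi=\theta\phi$. The principal obstacle is precisely this $m$-th root step, which is exactly where both assumptions $p\nmid|G/H|$ and $k$ algebraically closed are used.
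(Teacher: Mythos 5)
The paper itself offers no argument for this statement --- it is quoted from Willems --- so your proposal is judged on its own. Your overall strategy is sound and all of its structural steps check out: conditions (i) and (ii) do suffice to extend the $H$-action to $G$ when $G/H=\langle gH\rangle$ is cyclic of order $m$; the hypothesis $T(V)=G$ gives the intertwiner $\phi$; the element $u=\phi^m\rho(g^m)^{-1}$ does lie in $E^*=\mathrm{End}_{kH}(V)^*$ and is $\sigma$-fixed; the ansatz $\pi=\theta\phi$ correctly reduces the problem to the twisted norm equation $N_\sigma(\theta)=u^{-1}$; and since $V$ is indecomposable and $k$ algebraically closed, $E$ is local with $E/J=k$, $\sigma$ acts trivially on $E/J$, and $\sigma(j)=j$ in the decomposition $u=c(1+j)$. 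Restricting to $\sigma$-fixed $\theta$, so that the norm collapses to $\theta^m$, is also legitimate.

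The one step that fails as written is the extraction of the $m$-th root of $1+j$ by the binomial series $(1+j)^{-1/m}=\sum_{r\ge 0}\binom{-1/m}{r}j^r$. In characteristic $p$ the coefficient $\binom{-1/m}{r}=\frac{(-1/m)(-1/m-1)\cdots(-1/m-r+1)}{r!}$ is not defined for $r\ge p$, since $r!=0$ in $k$; and you cannot assume the series stops before $r=p$, because the nilpotency index of $J$ can exceed $p$ (indeed in the present paper endomorphism rings of the form $k[x]/(x^{r})$ with $r$ up to $p^s$ occur). The gap is easily repaired: work in the commutative subalgebra $k[j]$, where the Frobenius gives $(1+j)^{p^N}=1+j^{p^N}=1$ for $p^N$ at least the nilpotency index of $j$; since $\gcd(m,p)=1$ choose $a$ with $am\equiv 1\pmod{p^N}$, and then $(1+j)^{-a}$ is an $m$-th root of $(1+j)^{-1}$ which, being a polynomial in $j$, is automatically $\sigma$-fixed. (Alternatively one can make the binomial coefficients legitimate by interpreting $-1/m$ in the $p$-adic integers and reducing $\binom{-1/m}{r}\in\mathbb{Z}_p$ modulo $p$.) Together with the scalar $m$-th root $c^{-1/m}\in k^*$, this completes your construction, and with this repair the proof is correct.
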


\begin{lem}\label{direct}
As a $kD_n$-module, $k(D_n/C_n)$ is a direct sum of $2$ submodules
of dimensions $1$.
\end{lem}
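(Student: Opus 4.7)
The plan is to exhibit the decomposition explicitly by diagonalizing the action of $b$ on the $2$-dimensional module $k(D_n/C_n)$. Since $C_n \lhd D_n$ has index $2$, the quotient $D_n/C_n$ consists of exactly two cosets $C_n$ and $bC_n$, so $k(D_n/C_n)$ has $k$-dimension $2$ with basis $\{C_n, bC_n\}$.

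First I would record the $kD_n$-action on this basis via the formula $g_2 \cdot (g_1 C_n) = g_2 g_1 C_n$ of Lemma \ref{quention}. Since $a \in C_n$, the generator $a$ fixes both cosets, so $a$ acts as the identity. For $b$ one has $b \cdot C_n = bC_n$ and $b \cdot (bC_n) = b^2 C_n = C_n$, so $b$ permutes the two basis vectors by a transposition.

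Next I would exploit that $p$ is odd, hence $2$ is invertible in $k$, to form the sum and difference
$$e_+ = C_n + bC_n, \qquad e_- = C_n - bC_n,$$
which constitute another $k$-basis of $k(D_n/C_n)$. A direct verification gives $a \cdot e_\pm = e_\pm$, $b \cdot e_+ = e_+$ and $b \cdot e_- = -e_-$, so $ke_+$ and $ke_-$ are each $1$-dimensional $kD_n$-submodules and $k(D_n/C_n) = ke_+ \oplus ke_-$.

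There is essentially no obstacle; the only point requiring any care is the invertibility of $2$ in $k$ that guarantees $e_+$ and $e_-$ are linearly independent, and this is built into the standing hypothesis that $p$ is odd.
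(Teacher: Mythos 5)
Your proof is correct and follows essentially the same route as the paper: compute the action of $a$ (trivial) and $b$ (transposition) on the two cosets, then split off the one-dimensional submodules spanned by the sum and the difference of the basis vectors. Your explicit remark that oddness of $p$ makes $2$ invertible (so the sum and difference form a basis) is a point the paper leaves implicit, but otherwise the arguments coincide.
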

\begin{proof}
Obviously, there is a $k$-basis $\{v_1, v_2\}$ of $k(D_n/C_n)$ such
that the action of $kD_n$ on $k(D_n/C_n)$ is given by
\begin{equation*}
\begin{cases}a\cdot v_1=v_1,   \\ a\cdot v_2=v_2,
\end{cases}
\begin{cases}b\cdot v_1=v_2,   \\ b\cdot v_2=v_1.
\end{cases}
\end{equation*}
Hence, $V_1={\rm span}\{v_1+v_2\}$ and $V_2={\rm span}\{v_1-v_2\}$ are
submodules of $k(D_n/C_n)$, and $k(D_n/C_n)=V_1\oplus V_2$.
\end{proof}

Let $V_{r, i}$ be the indecomposable $kC_n$-module corresponding to
the representation $\rho_{r, i}$, where $1\leqslant r\leqslant p^s$
and $0\leqslant i\leqslant t-1$. For any $x\in D_n$, let $${}^xV_{r,
i}=x\otimes V_{r, i}=\{x\otimes v|v\in V_{r, i}\}.$$ Then ${}^xV_{r,
i}$ is a $kC_n$-module with the action given by $a(x\otimes
v)=x\otimes yv$, where $a\in C_n$ and $y=x^{-1}ax\in
x^{-1}C_nx=C_n$. Let $T(V_{r, i})=\{x\in D_n|x\otimes V_{r, i}\cong
V_{r, i}\}$ be the inertia group of $V_{r, i}$.

\begin{theo}\label{induced}
Suppose that $n$ is odd. Let $1\leqslant r\leqslant p^s$. Then we have

(1) The representation $\Omega_{2r, i}$ induced from $\rho_{r, i}$ is
indecomposable for any $1\leqslant i\leqslant
t-1$.

(2) The representation $\Omega_{2r, 0}$ induced from $\rho_{r, 0}$ is
a direct sum of two indecomposable representations $\Phi_{r, 0}$ and
$\Phi_{r,0}'$ defined by
$$\Phi_{r, 0}(a)=A_{r, 0},\ \Phi_{r, 0}(b)=T \mbox{ and }
\Phi_{r, 0}'(a)=A_{r, 0},\ \Phi_{r, 0}'(b)=-T,$$
respectively, where $T$ is
the matrix defined in the proof of Lemma \ref{lem1}.
\end{theo}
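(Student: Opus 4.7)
The plan is to handle the two parts with the inertia-subgroup machinery of Theorem \ref{T(W)} and the explicit intertwining matrix $T$ supplied by Lemma \ref{lem1}.

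For part~(1), $C_n$ is normal of index $2$ in $D_n$, so $C_n\subseteq T(V_{r,i})\subseteq D_n$ with only one intermediate possibility to rule out. The twisted module ${}^bV_{r,i}$ carries the representation $a\mapsto \rho_{r,i}(b^{-1}ab)=\rho_{r,i}(a^{-1})=A_{r,i}^{-1}$, so $b\in T(V_{r,i})$ iff $A_{r,i}\sim A_{r,i}^{-1}$. For $1\leqslant i\leqslant t-1$ Lemma \ref{lem1}(1) excludes this, so $T(V_{r,i})=C_n$ and Theorem \ref{T(W)} immediately yields that $\Omega_{2r,i}=V_{r,i}\!\!\uparrow^{D_n}$ is indecomposable.

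For part~(2), I would first check that $\Phi_{r,0}$ (and therefore $\Phi_{r,0}'$) is a bona fide representation of $D_n$. Writing $A_{r,0}=I_r+N$ with $N$ strictly upper triangular and $r\leqslant p^s$, in characteristic $p$ we get $A_{r,0}^{p^s}=I_r+N^{p^s}=I_r$, so $\Phi_{r,0}(a)^n=I_r$; the relation $b^2=1$ becomes $T^2=I_r$ from Lemma \ref{T2=T}; and the braid relation $(ab)^2=1$ unfolds as $(A_{r,0}T)^2=A_{r,0}(TA_{r,0})T=(A_{r,0}TA_{r,0})T=T\cdot T=I_r$, using precisely the identity $A_{r,0}TA_{r,0}=T$ from Lemma \ref{lem1}(2). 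By construction $\Phi_{r,0}\!\!\downarrow_{C_n}\cong V_{r,0}$, and since $V_{r,0}$ is indecomposable, any splitting of $\Phi_{r,0}$ over $D_n$ would restrict to a non-trivial splitting of $V_{r,0}$ over $C_n$; hence $\Phi_{r,0}$ is indecomposable, and the same reasoning handles $\Phi_{r,0}'$.

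It remains to realise $\Omega_{2r,0}$ as $\Phi_{r,0}\oplus\Phi_{r,0}'$. Lemma \ref{quention} gives
$$\Omega_{2r,0}=V_{r,0}\!\!\uparrow^{D_n}\cong\Phi_{r,0}\!\!\downarrow_{C_n}\!\!\uparrow^{D_n}\cong k(D_n/C_n)\otimes\Phi_{r,0},$$
and Lemma \ref{direct} splits $k(D_n/C_n)=V_1\oplus V_2$ with $b$ acting by $+1$ on $V_1$ and by $-1$ on $V_2$; tensoring recovers $\Phi_{r,0}\oplus\Phi_{r,0}'$ exactly. The main obstacle I anticipate is getting the direction of the intertwining identity right: Lemma \ref{lem1}(2) only records $A_{r,0}TA_{r,0}=T$, whereas the dihedral relation $bab^{-1}=a^{-1}$ needs $TA_{r,0}T=A_{r,0}^{-1}$, and only the companion fact $T^2=I_r$ of Lemma \ref{T2=T} (together with oddness of $p$, so that $\pm T$ are genuinely distinct) lets one pass between the two. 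Everything else is a routine application of the induction/restriction formalism.
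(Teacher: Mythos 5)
Your proposal is correct and follows essentially the same route as the paper: part (1) via the inertia group $T(V_{r,i})=C_n$ (Lemma \ref{lem1}(1)) together with Theorem \ref{T(W)}, and part (2) by extending $\rho_{r,0}$ to $\Phi_{r,0}$ using $T$ with $A_{r,0}TA_{r,0}=T$ and $T^2=I$, then decomposing $V_{r,0}\!\!\uparrow^{D_n}\cong k(D_n/C_n)\otimes W\cong V_1\otimes W\oplus V_2\otimes W$ via Lemmas \ref{quention} and \ref{direct}. The only cosmetic difference is that you verify $\Phi_{r,0}$ is a representation by checking the dihedral presentation relations (and make the indecomposability-by-restriction step explicit), whereas the paper verifies multiplicativity of $\Phi_{r,0}(xb^i)=\rho_{r,0}(x)T^i$ directly; both are equivalent.
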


\begin{proof}
(1) We consider the inertia group $T(V_{r, i})$ of $V_{r, i}$.
Clearly, $C_n\subseteq T(V_{r, i})$.  Let ${}^{ba^j}\!\!\rho_{r, i}$
be the representation corresponding to the $kC_n$-module
$^{ba^j}\!V_{r, i}$, $0\leqslant j\leqslant n-1$. Then
${}^{ba^j}\!\!\rho_{r, i}(a)=A_{r, i}^{-1}$, which is not similar to
$\rho_{r, i}(a)=A_{r, i}$ by Lemma \ref{lem1}. Hence $T(V_{r,
i})=C_n$, and the result follows from Theorem \ref{T(W)}.

(2) By Lemma \ref{lem1}, we have $\rho_{r, 0}({}^b\!\!a)=\rho_{r,
0}(bab^{-1})= \rho_{r, 0}(a^{-1})=T\rho_{r, 0}(a)T^{-1}. $ Clearly,
$\rho_{r, 0}({}^{b^i}\!\!a^j)=T^i\rho_{r, 0}(a^j)T^{-i}$ for any
integers $i$ and $j$.

Following the method described in \cite{willems} or \cite[section
9]{huppert}, we define $\Phi_{r, 0}$ by putting $\Phi_{r,
0}(xb^i)=\rho_{r, 0}(x)T^i$ for all $x\in C_n$,  where $i=0, 1$.
Then $\Phi_{r, 0}(xb^j)=\rho_{r, 0}(x)T^j$ for any integer $j$ since
$T^2=I$ by Lemma \ref{T2=T}. Thus for any positive integers $i$, $j$, and
$x, y\in C_n$, we have
\begin{eqnarray*}
\Phi_{r, 0}[(xb^i)(yb^j)]&=&\Phi_{r, 0}[(xb^iyb^{-i})b^{i+j}]
=\Phi_{r, 0}[(x\:{}^{b^i}\!\!y)b^{i+j}]=\rho_{r, 0}(x\:{}^{b^i}\!\!y)T^{i+j}\\
&=&\rho_{r, 0}(x)\rho_{r, 0}({}^{b^i}\!\!y)T^{i+j} =\rho_{r,
0}(x)T^i\rho_{r, 0}(y)T^j=\Phi_{r, 0}(xb^i)\Phi_{r, 0}(yb^j).
\end{eqnarray*}
Hence $\Phi_{r, 0}$ is an indecomposable representation of $kD_n$
and $\Phi_{r, 0}(a)=A_{r, 0},\  \Phi_{r, 0}(b)=T.$

Now let $W$ be the indecomposable $kD_n$-module corresponding to the
representation $\Phi_{r, 0}$. Then clearly $V_{r, 0}\cong
W\!\!\downarrow_{C_n}$. The existence of such a $kD_n$-module $W$
also follows from Theorem \ref{willems}. By Lemma \ref{lem1} and the
proof of Part (1), $T(V_{r, 0})=D_n$. Hence, $V_{r,
0}\!\!\uparrow^{D_n}\cong
W\!\!\downarrow_{C_n}\!\!\uparrow^{D_n}\cong k(D_n/C_n)\otimes
W\cong V_1\otimes W\oplus V_2\otimes W$ by Lemma \ref{quention} and
Lemma \ref{direct}. In this case, $V_1\otimes W\cong W$ and the
representation $\Phi_{r, 0}'$ corresponding to the $kD_n$-module
$V_2\otimes W$ is given by $\Phi_{r, 0}'(a)=A_{r, 0},\  \Phi_{r,
0}'(b)=-T.$
\end{proof}

When $n$ is even, we have the following similar result.

\begin{theo}\label{Theven}
Suppose that $n$ is even. Let $1\leqslant r\leqslant p^s$ and
$0\leqslant i\leqslant t-1$.

(1) If $i\neq0, \frac{t}{2}$, then the representation $\Omega_{2r, i}$ induced from $\rho_{r, i}$ is
indecomposable.

(2) If $i=0$ or $\frac{t}{2}$, then the representation $\Omega_{2r, i}$ induced from $\rho_{r, i}$
is a direct sum of two indecomposable
representations $\Phi_{r, i}$ and $\Phi_{r, i}'$ defined by
$$\Phi_{r, 0}(a)=A_{r, 0},\  \Phi_{r, 0}(b)=T;\
\Phi_{r, 0}'(a)=A_{r, 0},\  \Phi_{r, 0}'(b)=-T;$$
$$\Phi_{r, \frac{t}{2}}(a)=A_{r, \frac{t}{2}},\  \Phi_{r,
\frac{t}{2}}(b)=T_1;\ \Phi_{r, \frac{t}{2}}'(a)=A_{r,
\frac{t}{2}},\  \Phi_{r, \frac{t}{2}}'(b)=-T_1,$$
respectively, where $T$ and
$T_1$ are the matrices defined in the proofs of Lemma \ref{lem1} and
Lemma \ref{label2}, respectively.
\end{theo}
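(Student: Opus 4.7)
The plan is to mirror the proof of Theorem \ref{induced}, using Lemma \ref{label2} in place of Lemma \ref{lem1}; the only structural difference in the even case is that there are two ``exceptional'' values of $i$ (namely $i=0$ and $i=\frac{t}{2}$) rather than one, so Part (2) must be carried out twice with the similarity matrices $T$ and $T_1$ respectively. Throughout, the fundamental mechanism is the same: study the inertia group $T(V_{r,i})$ of $V_{r,i}$ as a $kC_n$-module inside $D_n$, and either invoke Theorem \ref{T(W)} when $T(V_{r,i})=C_n$, or build an explicit extension to $D_n$ when $T(V_{r,i})=D_n$.

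For Part (1), fix $i\neq 0,\tfrac{t}{2}$. As in the odd case, $C_n\subseteq T(V_{r,i})$, and for $0\leqslant j\leqslant n-1$ the module ${}^{ba^j}V_{r,i}$ carries the representation sending $a$ to $A_{r,i}^{-1}$. By Lemma \ref{label2}(1) this matrix is not similar to $A_{r,i}$, so $ba^j\notin T(V_{r,i})$, forcing $T(V_{r,i})=C_n$. Theorem \ref{T(W)} then gives that $\Omega_{2r,i}=\rho_{r,i}\!\!\uparrow^{D_n}$ is indecomposable.

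For Part (2), fix $i\in\{0,\tfrac{t}{2}\}$ and write $M$ for the corresponding similarity matrix ($M=T$ if $i=0$, $M=T_1$ if $i=\tfrac{t}{2}$). Lemma \ref{label2}(2) gives $A_{r,i}^{-1}=MA_{r,i}M^{-1}$, i.e.\ $\rho_{r,i}({}^b a)=M\rho_{r,i}(a)M^{-1}$, and Lemma \ref{T2=T} tells us $M^2=I$. Following the Willems/Huppert recipe used in Theorem \ref{induced}(2), I would define $\Phi_{r,i}(xb^j):=\rho_{r,i}(x)M^j$ for $x\in C_n$, $j\in\{0,1\}$, and check multiplicativity exactly as there: the identity $\rho_{r,i}({}^{b^i}\!y)=M^i\rho_{r,i}(y)M^{-i}$ for all $y\in C_n$ is inherited from the single relation on the generator $a$ since $C_n$ is cyclic. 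This produces an indecomposable $kD_n$-module $W$ with $W\!\!\downarrow_{C_n}\cong V_{r,i}$, giving $\Phi_{r,i}(a)=A_{r,i}$, $\Phi_{r,i}(b)=M$.

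To extract the second summand $\Phi_{r,i}'$, I would apply Lemma \ref{quention} and Lemma \ref{direct} to compute
\[V_{r,i}\!\!\uparrow^{D_n}\cong W\!\!\downarrow_{C_n}\!\!\uparrow^{D_n}\cong k(D_n/C_n)\otimes W\cong (V_1\otimes W)\oplus(V_2\otimes W),\]
identifying $V_1\otimes W\cong W$ and recognizing $V_2\otimes W$ as the twist of $\Phi_{r,i}$ by the sign character of $D_n/C_n$, hence given by $a\mapsto A_{r,i}$, $b\mapsto -M$. The main obstacle is essentially bookkeeping for $i=\tfrac{t}{2}$: one must ensure that the conjugation relation $A_{r,t/2}^{-1}=T_1 A_{r,t/2}T_1^{-1}$ really does extend to all of $C_n$ as required, which is automatic because $C_n=\langle a\rangle$ is cyclic so the relation on the single generator suffices, and that $T_1^2=I$ is genuinely needed — both of which are already supplied by Lemma \ref{label2}(2) and Lemma \ref{T2=T}. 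Everything else is a direct transcription of the odd-case argument.
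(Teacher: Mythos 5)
Your proposal is correct and follows exactly the route the paper intends: the paper gives no separate proof of Theorem \ref{Theven}, presenting it as the ``similar result'' to Theorem \ref{induced}, and your argument is precisely that adaptation — Lemma \ref{label2}(1) plus Theorem \ref{T(W)} for the inertia-group step in Part (1), and the Willems/Huppert extension with $T$ or $T_1$ (using Lemma \ref{T2=T}) together with Lemma \ref{quention} and Lemma \ref{direct} for the splitting in Part (2). No gaps; the remark that the conjugation relation on the generator $a$ propagates to all of $C_n$ is exactly the point that makes the transcription work.
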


\begin{cor}
Suppose that $n$ is odd. Then $\left\{\Phi_{1, 0},\ \Phi_{1, 0}',\
\Omega_{2, i}\left|1\leqslant i\leqslant
\frac{t-1}{2}\right.\right\}$ is a complete set of irreducible
representations of $kD_n$ up to isomorphism.
\end{cor}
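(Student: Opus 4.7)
The plan is to verify the statement by a counting argument combined with explicit checks of irreducibility and pairwise non-isomorphism. Since $n$ is odd, the paper has already listed the $p$-regular conjugacy classes of $D_n$ as $\{1\}$, $\{b, ba, \dots, ba^{n-1}\}$, and $\{a^{ip^s}, a^{-ip^s}\}$ for $1\leqslant i\leqslant \frac{t-1}{2}$, so there are exactly $\frac{t+3}{2}$ of them. By the general principle that the number of simple $kG$-modules equals the number of $p$-regular conjugacy classes of $G$, the algebra $kD_n$ has exactly $\frac{t+3}{2}$ irreducible representations up to isomorphism. The candidate set has $2+\frac{t-1}{2}=\frac{t+3}{2}$ elements, matching the required count. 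So it suffices to show (a) each representation in the set is irreducible, and (b) the representations are pairwise non-isomorphic.

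For (a), the representations $\Phi_{1,0}$ and $\Phi_{1,0}'$ are of degree $1$ by Theorem \ref{induced}(2) (when $r=1$, both $A_{1,0}$ and $T$ are $1\times 1$, namely $(1)$, and $\Phi_{1,0}'(b)=-T=(-1)$), hence clearly irreducible. For $\Omega_{2,i}$ with $1\leqslant i\leqslant \frac{t-1}{2}$, I would argue directly: the matrix $\Omega_{2,i}(a)=\mathrm{diag}(\xi^i,\xi^{-i})$ has two distinct eigenvalues, because the restriction $1\leqslant i\leqslant \frac{t-1}{2}$ gives $2\leqslant 2i\leqslant t-1$, so $\xi^{2i}\neq 1$. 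Hence any nonzero $\langle a\rangle$-invariant subspace of $k^2$ must contain one of the coordinate axes. But $\Omega_{2,i}(b)=\left(\begin{smallmatrix}0&1\\1&0\end{smallmatrix}\right)$ swaps these two axes, so neither of them is $D_n$-invariant. Therefore $\Omega_{2,i}$ has no proper nonzero invariant subspace and is irreducible.

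For (b), the representations $\Phi_{1,0}, \Phi_{1,0}'$ have degree $1$ while each $\Omega_{2,i}$ has degree $2$, so these two families are disjoint. Within the first family, $\Phi_{1,0}(b)=1\neq -1=\Phi_{1,0}'(b)$, so $\Phi_{1,0}\not\cong\Phi_{1,0}'$. Within the second family, for $1\leqslant i< j\leqslant\frac{t-1}{2}$, the character values
\[
\mathrm{tr}\,\Omega_{2,i}(a)=\xi^i+\xi^{-i},\qquad \mathrm{tr}\,\Omega_{2,j}(a)=\xi^j+\xi^{-j}
\]
are distinct; indeed, if they were equal, then $\{\xi^i,\xi^{-i}\}=\{\xi^j,\xi^{-j}\}$ as roots of the same degree-$2$ polynomial, which forces $i\equiv\pm j\pmod t$, contradicting the range of $i,j$. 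Hence $\Omega_{2,i}\not\cong\Omega_{2,j}$.

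Combining these observations with the count from the first paragraph yields the corollary. The only step requiring genuine attention is the irreducibility of $\Omega_{2,i}$, and even this reduces immediately to the simple fact that a two-dimensional representation in which one element acts diagonally with distinct eigenvalues and another element swaps the eigenlines admits no invariant line; there is no substantive obstacle.
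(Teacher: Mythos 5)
Your proof is correct and follows essentially the same route as the paper: the paper also deduces the corollary by combining Brauer's theorem (the number of simple $kD_n$-modules equals the number $\frac{t+3}{2}$ of $p$-regular classes listed earlier) with the easy irreducibility of the $\Omega_{2,i}$. You merely spell out the details the paper leaves implicit, namely the eigenvalue/swap argument for irreducibility and the pairwise non-isomorphism checks.
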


\begin{proof}
It is easy to show that $\Omega_{2, i}$ is irreducible for any
$1\leqslant i\leqslant \frac{t-1}{2}$. Then the
lemma follows from the following well known theorem.
\end{proof}

\begin{theo}[R. Brauer]
Let $k$ be a splitting field of characteristic $p$ for a finite
group $G$. The number of isomorphism classes of simple $kG$-modules
is equal to the number of conjugate classes of $G$ which consist of
$p$-regular elements.
\end{theo}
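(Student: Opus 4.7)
The plan is the classical argument via the Jacobson radical and the commutator subspace of $kG$. I would first reduce to an algebra statement: for any split finite-dimensional $k$-algebra $A$, the number of isomorphism classes of simple $A$-modules equals $\dim_k A/([A,A]+J(A))$, where $J(A)$ is the Jacobson radical and $[A,A]$ is the $k$-subspace spanned by commutators. This uses that $A/J(A)$ is split semisimple, hence of the form $\prod_{i=1}^r M_{n_i}(k)$, and each matrix block contributes exactly one dimension modulo commutators via the trace.

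Applied to $A = kG$, the task becomes the computation of $\dim_k kG/([kG,kG] + J(kG))$. Since $[kG,kG]$ is spanned by elements $g - hgh^{-1}$ with $g,h \in G$, the quotient $kG/[kG,kG]$ already has a basis indexed by the conjugacy classes $\mathcal{K}(G)$ of $G$; the remaining work is to factor out the image of $J(kG)$.

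The technical heart is to introduce the Brauer subspace
\[ T(kG) = \{x \in kG : x^{p^m} \in [kG,kG] \text{ for some } m \geq 0\} \]
and show (i) $T(kG) = [kG,kG] + J(kG)$, and (ii) the images modulo $T(kG)$ of a system of representatives of the $p$-regular conjugacy classes form a basis of $kG/T(kG)$. For (i), the inclusion $\supseteq$ follows because $J(kG)$ is nilpotent, so every element of $J(kG)$ satisfies the defining condition of $T(kG)$ trivially; the reverse inclusion uses a trace computation in $A/J(A) \cong \prod M_{n_i}(k)$, where an element whose $p^m$-th power is a commutator must already have vanishing trace in each block. For the spanning half of (ii), the Jordan decomposition $g = g_p g_{p'} = g_{p'} g_p$ (with $g_p$ of $p$-power order and $g_{p'}$ of $p$-regular order) together with $g^{p^m} = g_{p'}^{p^m}$ for all sufficiently large $m$ shows that every group element is equivalent modulo $T(kG)$ to its $p$-regular part.

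The main obstacle is the linear independence in (ii): showing that the $p$-regular class representatives remain linearly independent modulo $T(kG)$. This is the nontrivial lower bound on the number of simple $kG$-modules, and is traditionally handled by producing enough irreducible Brauer characters (as $k$-valued class functions on $p$-regular elements) to separate the $p$-regular classes, or equivalently by a nondegeneracy argument on the Cartan matrix. Combining this lower bound with the upper bound from (i) and (ii) yields the desired equality.
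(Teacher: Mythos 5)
The paper does not actually prove this statement: it is quoted as a classical theorem of R.~Brauer and used only to count the simple $kD_n$-modules, so there is no proof of record to compare yours against. Your proposal is the standard self-contained argument (the one in Serre or Curtis--Reiner), and its architecture is sound: reduce to computing $\dim_k A/([A,A]+J(A))$ for a split algebra $A$, identify this with $\dim_k kG/T(kG)$ via the Brauer subspace $T(kG)=\{x: x^{p^m}\in[kG,kG]\}$, and count $p$-regular classes using the Jordan decomposition $g=g_pg_{p'}$.

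Two points need attention before this is a proof. First, everything you do with $T(kG)$ presupposes that it is a $k$-subspace and that $[A,A]$ is stable under taking $p$-th powers; both rest on the congruence $(a+b)^{p^m}\equiv a^{p^m}+b^{p^m}\pmod{[A,A]}$, which you never state. It is elementary (expand the power and group the mixed monomials into orbits under cyclic rotation, each orbit sum lying in $[A,A]$), but it is the engine of the entire argument and cannot be left implicit. Second, and more seriously, the linear independence of the $p$-regular class representatives modulo $T(kG)$ is not an external input requiring Brauer characters or the nondegeneracy of the Cartan matrix --- invoking those here risks circularity, since in most developments they are established only after this counting theorem. The independence follows directly from your own setup: choose $N$ with $p^N\equiv 1\pmod{\mathrm{ord}(g_i)}$ for every $p$-regular representative $g_i$ (possible since $p\nmid\mathrm{ord}(g_i)$), so that $g_i^{p^{Nm}}=g_i$ for all $m$; then for $x=\sum_i\lambda_ig_i\in T(kG)$ the congruence above gives $x^{p^{Nm}}\equiv\sum_i\lambda_i^{p^{Nm}}g_i\pmod{[kG,kG]}$, and since representatives of distinct conjugacy classes are linearly independent modulo $[kG,kG]$, membership of some $x^{p^{Nm}}$ in $[kG,kG]$ forces every $\lambda_i=0$. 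With these two repairs your outline closes up into a complete and correct proof.
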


When $n$ is even, $kD_n$ has $\frac{t+6}{2}$ irreducible
representations.
\begin{cor}
Suppose that $n$ is even. Then $\left\{\Phi_{1, 0},\ \Phi_{1, 0}',\
\Phi_{1, \frac{t}{2}},\ \Phi_{1, \frac{t}{2}}',\ \Omega_{2,
i}\left|1\leqslant i\leqslant\frac{t}{2}-1\right.\right\}$ is a
complete set of irreducible representations of $kD_n$ up to
isomorphism.
\end{cor}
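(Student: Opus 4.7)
The plan is to imitate the argument of the preceding corollary for odd $n$: verify that the $\frac{t+6}{2}$ listed representations are irreducible and pairwise non-isomorphic, and then appeal to Brauer's theorem, since the count of $p$-regular conjugate classes recorded before Theorem \ref{Theven} is exactly $\frac{t+6}{2}$ (the classes of $1$, $b$, $ab$ together with those of $a^{jp^s}$ for $1\leqslant j\leqslant \frac{t}{2}$).

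First, the four representations $\Phi_{1,0}$, $\Phi_{1,0}'$, $\Phi_{1,\frac{t}{2}}$, $\Phi_{1,\frac{t}{2}}'$ are automatically irreducible because they are one-dimensional: setting $r=1$ in Theorem \ref{Theven}(2) collapses $A_{1,0}$, $A_{1,\frac{t}{2}}$, $T$ and $T_1$ to $1\times 1$ scalars (indeed $T=T_1=(1)$ by the boundary condition $t_{rr}=1$). Their values on the generating pair $(a,b)$ are $(1,1)$, $(1,-1)$, $(\xi^{\frac{t}{2}},1)$ and $(\xi^{\frac{t}{2}},-1)$ respectively, and since $\xi^{\frac{t}{2}}\neq 1$, these four characters are pairwise distinct.

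Next, I would check that each $\Omega_{2,i}$ with $1\leqslant i\leqslant\frac{t}{2}-1$ is irreducible. With $r=1$ the matrices read $\Omega_{2,i}(a)=\mathrm{diag}(\xi^i,\xi^{-i})$ and $\Omega_{2,i}(b)=\bigl(\begin{smallmatrix}0&1\\1&0\end{smallmatrix}\bigr)$. Because $i\neq 0,\frac{t}{2}$, the eigenvalues $\xi^{\pm i}$ are distinct, so the only $\Omega_{2,i}(a)$-invariant lines are $ke_1$ and $ke_2$; but $\Omega_{2,i}(b)$ swaps them, so no line is stable under the whole group. Pairwise non-isomorphism for different indices in the allowed range follows by comparing the eigenvalue multiset $\{\xi^i,\xi^{-i}\}$ of the image of $a$: two such multisets coincide only when $i\equiv\pm j\pmod t$, and the range $1\leqslant i\leqslant\frac{t}{2}-1$ selects exactly one representative from each such pair. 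Dimension considerations rule out any isomorphism between an $\Omega_{2,i}$ and one of the four $\Phi$-characters.

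Assembling these steps, I obtain $\frac{t+6}{2}$ pairwise non-isomorphic irreducible $kD_n$-modules; Brauer's theorem then forces the list to be complete. The only mildly delicate point is the irreducibility of $\Omega_{2,i}$: the excluded values $i=0,\frac{t}{2}$ are exactly those for which $\xi^i=\xi^{-i}$, making $\Omega_{2,i}(a)$ a scalar and causing $\Omega_{2,i}$ to decompose as in Theorem \ref{Theven}(2). This is precisely the structural reason the even case carries two extra one-dimensional irreducibles on top of those appearing in the odd case.
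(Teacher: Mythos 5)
Your proposal is correct and follows essentially the same route as the paper: the paper likewise proves the analogous odd-$n$ corollary (and implicitly this one) by checking irreducibility of the listed representations and then invoking Brauer's theorem, since the count $4+(\frac{t}{2}-1)=\frac{t+6}{2}$ matches the number of $p$-regular conjugate classes recorded before Theorem \ref{Theven}. Your explicit verification of irreducibility and pairwise non-isomorphism of the $\Omega_{2,i}$ and the four one-dimensional $\Phi$'s simply fills in details the paper leaves as ``easy to show.''
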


\begin{lem}\label{x}
The following upper triangular matrix is invertible
\begin{center}
\begin{tabular}{c}
 $X=\left(\begin{tabular}{cccccccc}
$1$&$1$&$1$&$1$&$\cdots$&$1$&$1$&$1$\\
$0$&$x_{22}$&$x_{23}$&$x_{24}$&$\cdots$&$x_{2, r-2}$&$x_{2, r-1}$&$x_{2, r}$\\
$0$&$0$&$x_{33}$&$x_{34}$&$\cdots$&$x_{3, r-2}$&$x_{3, r-1}$&$x_{3, r}$\\
$\cdots$&$\cdots$&$\cdots$&$\cdots$&$\cdots$&$\cdots$&$\cdots$&$\cdots$\\
$0$&$0$&$0$&$0$&$\cdots$&$x_{r-2, r-2}$&$x_{r-2, r-1}$&$x_{r-2, r}$\\
$0$&$0$&$0$&$0$&$\cdots$&$0$&$x_{r-1, r-1}$&$x_{r-1, r}$\\
$0$&$0$&$0$&$0$&$\cdots$&$0$&$0$&$x_{r, r}$
 \end{tabular}\right), $
 \end{tabular}
\end{center}
where $x_{2k}=-\xi^{2i}\sum_{y=0}^{k-2}(-\xi^i)^y$ and
$x_{jk}=\sum_{y=j-1}^{k-1}(-1)^{k-y}\xi^{(k+1-y)i}x_{(j-1)y}$ for
all $3\leqslant j\leqslant k$.
\end{lem}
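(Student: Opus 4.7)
The plan is to exploit the fact that $X$ is upper triangular, so its determinant equals the product of its diagonal entries $x_{11}x_{22}\cdots x_{rr}$. Hence invertibility reduces to showing every diagonal entry is nonzero.

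First I would read off the diagonal entries from the given formulas. We have $x_{11}=1$ by inspection, and $x_{22}=-\xi^{2i}\sum_{y=0}^{0}(-\xi^i)^y=-\xi^{2i}$. For $j\geqslant 3$ the recursion
\[
x_{jk}=\sum_{y=j-1}^{k-1}(-1)^{k-y}\xi^{(k+1-y)i}x_{(j-1)y}
\]
specialized to $k=j$ has a single summand at $y=j-1$, giving
\[
x_{jj}=(-1)^{j-(j-1)}\xi^{(j+1-(j-1))i}x_{(j-1)(j-1)}=-\xi^{2i}x_{(j-1)(j-1)}.
\]
A trivial induction on $j$ then yields $x_{jj}=(-\xi^{2i})^{j-1}$ for all $1\leqslant j\leqslant r$.

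Since $\xi$ is a $t$-th primitive root of unity in $k$, in particular $\xi\neq 0$, so $-\xi^{2i}\neq 0$, and therefore $x_{jj}=(-\xi^{2i})^{j-1}\neq 0$ for every $j$. Consequently
\[
\det(X)=\prod_{j=1}^{r}x_{jj}=\prod_{j=1}^{r}(-\xi^{2i})^{j-1}=(-\xi^{2i})^{\binom{r}{2}}\neq 0,
\]
so $X$ is invertible. There is no real obstacle here: once one observes that at $k=j$ the recursion collapses to a single term, the diagonal is pinned down explicitly and the conclusion is immediate; the only thing to be careful about is checking the base cases $j=1,2$ separately since the recursion is stated only for $j\geqslant 3$.
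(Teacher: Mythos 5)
Your proposal is correct and follows essentially the same route as the paper: since $X$ is upper triangular, the determinant is the product of the diagonal entries, which the recursion pins down (the paper records this product as $(-1)^z\xi^{r(r-1)i}$, matching your $(-\xi^{2i})^{\binom{r}{2}}$), and this is nonzero because $\xi$ is a root of unity. Your write-up just makes explicit the one-term collapse of the recursion at $k=j$ that the paper leaves implicit.
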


\begin{proof}
The determinant of $X$ is
$\Pi_{y=2}^{r}x_{yy}=(-1)^z\xi^{r(r-1)i}\neq 0$ for some $z\in
\mathbb{Z^+}$.
\end{proof}

\begin{theo}\label{odd}
Suppose that $n$ is odd. Let $1\leqslant i\leqslant\frac{t-1}{2}$ and
$\frac{t+1}{2}\leqslant j\leqslant t-1$. Then the representations $\Omega_{2r,
i}$ and $\Omega_{2r, j}$ are isomorphic if and only if $i+j=t$. In
this case, the transformation matrix is
\begin{tabular}{l}
$\left(\begin{tabular}{cc}
$0$&$X$\\
$X$&$0$
 \end{tabular}\right)$, where $X$ is the matrix defined in Lemma
 \ref{x}.
  \end{tabular}
\end{theo}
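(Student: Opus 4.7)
The plan is to split the claim into a necessary direction and a sufficient direction. For necessity, I will use the fact that isomorphic representations must have the same characteristic polynomials (hence the same eigenvalues with multiplicity), applied to the generator $a$. For sufficiency, I will exhibit an explicit intertwiner using the block matrix $M=\left(\begin{smallmatrix}0&X\\ X&0\end{smallmatrix}\right)$ and reduce the verification to a single identity $XA_{r,i}^{-1}=A_{r,j}X$ on $r\times r$ blocks.

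For necessity, recall that $\Omega_{2r,i}(a)=\mathrm{diag}(A_{r,i},A_{r,i}^{-1})$, whose eigenvalues (with multiplicities) are $\xi^i$ and $\xi^{-i}=\xi^{t-i}$, each appearing $r$ times. Similarly $\Omega_{2r,j}(a)$ has eigenvalues $\xi^j$ and $\xi^{t-j}$, each with multiplicity $r$. If the two representations are isomorphic, these multisets must coincide. Since $1\leqslant i\leqslant(t-1)/2$ and $(t+1)/2\leqslant j\leqslant t-1$ force $i\neq j$, the only possibility is $i=t-j$, i.e.\ $i+j=t$.

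For sufficiency, assume $i+j=t$, so $\xi^j=\xi^{-i}$. Set $M=\left(\begin{smallmatrix}0&X\\ X&0\end{smallmatrix}\right)$, which is invertible by Lemma \ref{x}. A direct block multiplication gives
\begin{equation*}
M\,\Omega_{2r,i}(b)=\begin{pmatrix}X&0\\0&X\end{pmatrix}=\Omega_{2r,j}(b)\,M,
\end{equation*}
so the relation at $b$ is automatic. For the generator $a$, the identity $M\,\Omega_{2r,i}(a)=\Omega_{2r,j}(a)\,M$ is equivalent to the pair of $r\times r$ identities
\begin{equation*}
X\,A_{r,i}^{-1}=A_{r,j}\,X,\qquad X\,A_{r,i}=A_{r,j}^{-1}\,X,
\end{equation*}
the second of which follows from the first by inverting (using that both sides of the first are invertible). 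So the whole claim reduces to the single identity $A_{r,j}X=XA_{r,i}^{-1}$.

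The main obstacle, and the only nontrivial computation, is verifying this identity for the explicit matrix $X$ of Lemma \ref{x}. My plan is to check it entrywise by comparing the $(j,k)$-entries of $A_{r,j}X$ and $XA_{r,i}^{-1}$, using the explicit description of $A_{r,i}^{-1}$ given in the lemma preceding Lemma \ref{lem1}. Since $A_{r,j}$ has $\xi^j=\xi^{-i}$ on the diagonal and $1$'s on the superdiagonal, $(A_{r,j}X)_{jk}=\xi^{-i}x_{jk}+x_{j+1,k}$ (with the convention $x_{r+1,k}=0$); on the other side, $(XA_{r,i}^{-1})_{jk}=\sum_{y=j}^{k}x_{jy}(-1)^{k-y}\xi^{-(k+1-y)i}$. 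The recursive definitions $x_{2k}=-\xi^{-i}\sum_{y=0}^{k-2}(-\xi^i)^y\cdot\xi^{3i}$... more precisely, the given recursions $x_{jk}=\sum_{y=j-1}^{k-1}(-1)^{k-y}\xi^{(k+1-y)i}x_{(j-1)y}$, and the base row $(1,1,\ldots,1)$, are designed exactly so that these two expressions coincide; I would confirm this by induction on $j$, with the base case $j=1$ reducing to the column-sums of $A_{r,i}^{-1}$ and the inductive step mirroring the defining recursion for $x_{jk}$. Once this identity is established, the block matrix $M$ is the required intertwiner, finishing the proof.
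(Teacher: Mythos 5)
Your overall architecture is sound: the eigenvalue argument for the ``only if'' direction is correct (and is actually more explicit than the paper, which only records the sufficiency computation), the observation that the relation at $b$ holds automatically for $M=\left(\begin{smallmatrix}0&X\\X&0\end{smallmatrix}\right)$ is right, and the remark that one of the two block identities follows from the other by inverting is fine. The genuine problem is the single identity you reduce everything to. The matrix $X$ of Lemma \ref{x} is built from \emph{positive} powers of $\xi^{i}$, and what it actually satisfies is $A_{r,i}X=XA_{r,t-i}^{-1}$, not your identity $A_{r,j}X=XA_{r,i}^{-1}$ (with $j=t-i$). Already for $r=2$ one has $X=\left(\begin{smallmatrix}1&1\\0&-\xi^{2i}\end{smallmatrix}\right)$, and the $(1,2)$-entry of $A_{2,j}X$ is $\xi^{-i}-\xi^{2i}$ while that of $XA_{2,i}^{-1}$ is $\xi^{-i}-\xi^{-2i}$; these agree only if $t\mid 4i$, impossible for odd $t\geqslant 3$ and $1\leqslant i\leqslant\frac{t-1}{2}$. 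Your planned induction would in fact collapse at the base row: $(A_{r,j}X)_{1k}=\xi^{-i}+x_{2k}$ involves positive powers of $\xi^{i}$, whereas the $k$-th column sum of $A_{r,i}^{-1}$ is $\xi^{-i}-\xi^{-2i}+\cdots$, entirely in negative powers, so the two expressions do not coincide.

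The repair is a matter of orientation: verify instead $\Omega_{2r,i}(g)M=M\Omega_{2r,j}(g)$, i.e.\ regard $M$ as an intertwiner in the opposite direction. The same block bookkeeping then reduces the claim to $A_{r,i}X=XA_{r,j}^{-1}=XA_{r,t-i}^{-1}$, and here the entries of $A_{r,t-i}^{-1}$ are signed \emph{positive} powers $(-1)^{m}\xi^{(m+1)i}$, which is exactly what the recursion $x_{jk}=\sum_{y=j-1}^{k-1}(-1)^{k-y}\xi^{(k+1-y)i}x_{(j-1)y}$ is designed to absorb; this entrywise verification is precisely the computation carried out in the paper. (Alternatively, keep your orientation but replace $\xi^{i}$ by $\xi^{j}=\xi^{-i}$ throughout the construction of $X$.) Since isomorphism is symmetric, either orientation proves the theorem, but as written your key identity is false for the $X$ of Lemma \ref{x}, so the sufficiency half of your argument does not go through without this correction.
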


\begin{proof}
It suffices to show that
\begin{center}
\begin{tabular}{c}
$\left(\begin{tabular}{cc}
$A_{r, i}$&$0$\\
$0$&$A_{r, i}^{-1}$
 \end{tabular}\right)$
$\left(\begin{tabular}{cc}
$0$&$X$\\
$X$&$0$
 \end{tabular}\right)=$
$\left(\begin{tabular}{cc}
$0$&$X$\\
$X$&$0$
 \end{tabular}\right)$
$\left(\begin{tabular}{cc}
$A_{r, t-i}$&$0$\\
$0$&$A_{r, t-i}^{-1}$
 \end{tabular}\right), $
  \end{tabular}
  \end{center}
which is equivalent to $A_{r, i}X=XA_{r, t-i}^{-1}$.

Let $A_{r, i}X=(c_{vu})_{r\times r}$ and $XA_{r,
t-i}^{-1}=(d_{vu})_{r\times r}$. We need to check $c_{vu}=d_{vu}$ for
all $1\leqslant v, u\leqslant r$. In fact, if $v=u=1$, then $c_{11}=d_{11}=\xi^i$.
If $v=1$ and $u\neq1$, then
$c_{1u}=\xi^i+x_{2u}=\xi^i-\xi^{2i}+\cdots+(-1)^{u+1}\xi^{ui}=d_{1u}$.
If $v>u$, then $c_{vu}=d_{vu}=0$.
If $v=u\neq1$, then $c_{vv}=d_{vv}=\xi^ix_{vv}$.
If $v\neq1$ and $u=v+j$ for $1\leqslant j\leqslant r-v$, then
$c_{vu}=\xi^ix_{v(v+j)}+x_{(v+1)(v+j)}=$
$\sum_{y=v}^{v+j-1}(-1)^{v+j-y}\xi^{(v+j+1-y)i}x_{vy}=d_{vu}$. This
completes the proof.
\end{proof}

\begin{theo}
Suppose that $n$ is even. Let $1\leqslant i\leqslant\frac{t}{2}-1$ and
$\frac{t}{2}+1\leqslant j\leqslant t-1$. Then the representations $\Omega_{2r,
i}$ and $\Omega_{2r, j}$ are isomorphic if and only if $i+j=t$. In
this case, the transformation matrix is
\begin{tabular}{l}
$\left(\begin{tabular}{cc}
$0$&$X$\\
$X$&$0$
\end{tabular}\right)$,
\end{tabular}
where $X$ is the matrix defined in Lemma \ref{x}.
\end{theo}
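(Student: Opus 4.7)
The plan is to mirror the proof of Theorem \ref{odd} verbatim, since the even-$n$ case differs from the odd-$n$ case only in the admissible index ranges and not in any matrix computation. The index ranges $1\leqslant i\leqslant \tfrac{t}{2}-1$ and $\tfrac{t}{2}+1\leqslant j\leqslant t-1$ are precisely chosen to avoid the exceptional values $0$ and $\tfrac{t}{2}$ handled separately in Theorem \ref{Theven}, so the two bad cases where $A_{r,i}$ is similar to $A_{r,i}^{-1}$ do not arise.

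For the \emph{only if} direction, I would argue by comparing the Jordan structure of $\Omega_{2r,i}(a) = A_{r,i}\oplus A_{r,i}^{-1}$ with that of $\Omega_{2r,j}(a) = A_{r,j}\oplus A_{r,j}^{-1}$. Each summand $A_{r,i}$ is a single Jordan block of size $r$ with eigenvalue $\xi^i$, so $\Omega_{2r,i}(a)$ has Jordan type consisting of one block of size $r$ at $\xi^i$ and one block of size $r$ at $\xi^{-i}$. An isomorphism $\Omega_{2r,i}\cong \Omega_{2r,j}$ forces $\{\xi^i,\xi^{-i}\}=\{\xi^j,\xi^{-j}\}$, hence $j\equiv \pm i\pmod{t}$. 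The range restrictions rule out $j=i$ and leave only $j=t-i$, i.e.\ $i+j=t$.

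For the \emph{if} direction, I would exhibit the intertwiner $M=\begin{pmatrix}0&X\\X&0\end{pmatrix}$ with $X$ as in Lemma \ref{x}. Invertibility of $M$ follows from invertibility of $X$. The intertwining relation splits into the two identities
\[
\Omega_{2r,i}(a)M=M\Omega_{2r,t-i}(a),\qquad \Omega_{2r,i}(b)M=M\Omega_{2r,t-i}(b).
\]
The second is immediate because $\Omega_{2r,i}(b)=\Omega_{2r,t-i}(b)=\begin{pmatrix}0&I_r\\ I_r&0\end{pmatrix}$, which commutes with $M$. The first reduces, on unpacking the block-diagonal form, to the single matrix identity $A_{r,i}X=XA_{r,t-i}^{-1}$. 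This is exactly the identity verified entry-by-entry at the end of Theorem \ref{odd}; the calculation depends only on the shape of $A_{r,i}$ and $A_{r,t-i}^{-1}$ together with the recursive formula for the entries $x_{jk}$, and is insensitive to whether $n$ is odd or even. I would therefore quote that computation rather than repeat it.

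The main (and only) potential obstacle is confirming that the eigenvalue-comparison step in the \emph{only if} direction really pins down $j=t-i$ uniquely in the given range; the stipulated bounds on $i$ and $j$ make this straightforward, so I anticipate no serious difficulty and the proof will be essentially a reference to Theorem \ref{odd}.
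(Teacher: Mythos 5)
Your proposal is correct and takes essentially the same route as the paper, whose proof of this theorem is simply ``similar to the proof of Theorem \ref{odd}'': exhibit the intertwiner $\left(\begin{smallmatrix}0&X\\X&0\end{smallmatrix}\right)$ and reduce the isomorphism to the identity $A_{r,i}X=XA_{r,t-i}^{-1}$, which is the entry-by-entry computation already done in the odd case and which indeed depends only on $\xi$ being a primitive $t$-th root of unity, not on the parity of $n$. Your eigenvalue comparison for the ``only if'' direction is just a correct spelling-out of what the paper (already in Theorem \ref{odd}) leaves implicit.
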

\begin{proof}
It is similar to the proof of Theorem \ref{odd}.
\end{proof}

We summarize the above discussion as follows.

\begin{theo}
(1) Suppose that $n=p^st$ is odd. Then $\left\{\Phi_{r, 0},\
\Phi_{r, 0}',\ \Omega_{2r, i}\left|1\leqslant r\leqslant p^s,
1\leqslant i\leqslant\frac{t-1}{2}\right.\right\}$ is a complete set
of indecomposable representations of $kD_n$ up to isomorphism.

(2) Suppose that $n=p^st$ is even. Then $\left\{\Phi_{r, 0},\
\Phi_{r, 0}',\ \Phi_{r, \frac{t}{2}},\ \Phi_{r, \frac{t}{2}}',\
\Omega_{2r, i}\left|1\leqslant r\leqslant p^s, 1\leqslant
i\leqslant\frac{t}{2}-1\right.\right\}$ is a complete set of
indecomposable representations of $kD_n$ up to isomorphism.
\end{theo}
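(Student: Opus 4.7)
The strategy I would follow is to verify three facts---that every listed representation is indecomposable, that the listed representations are pairwise non-isomorphic, and that the total cardinality of the list matches the count of isomorphism classes of indecomposable $kD_n$-modules given in the preceding theorem---from which completeness of the list follows at once.

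Indecomposability is already established by Theorems \ref{induced} and \ref{Theven}. For the counts, the odd-$n$ list contains $2p^s$ modules of the form $\Phi_{r,0},\Phi_{r,0}'$ together with $\frac{t-1}{2}p^s$ modules $\Omega_{2r,i}$, giving the required $\frac{t+3}{2}p^s$; the even-$n$ list gives $4p^s+(\frac{t}{2}-1)p^s=\frac{t+6}{2}p^s$. Both agree with what the preceding theorem predicts.

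For pairwise non-isomorphism I would argue in layers. First, $\Phi_{r,*}$ is $r$-dimensional and $\Omega_{2r,i}$ is $2r$-dimensional, so within a single family different values of $r$ are distinguished by dimension. When a $\Phi$ and an $\Omega$ happen to share a dimension, the eigenvalues of the image of $a$ separate them: $a$ acts with sole eigenvalue $1$ on $\Phi_{r,0}$ and $\Phi_{r,0}'$, with sole eigenvalue $\xi^{t/2}=-1$ on $\Phi_{r,t/2}$ and $\Phi_{r,t/2}'$, and with the two eigenvalues $\xi^{\pm i}$ on $\Omega_{2r,i}$. Since the admitted range of $i$ excludes both $0$ and $t/2$, these spectra are pairwise disjoint, which also separates the two $\Phi$-subfamilies from each other in the even-$n$ case. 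Within the $\Omega$-family, Theorem \ref{odd} and its even-$n$ counterpart give $\Omega_{2r,i}\cong\Omega_{2r,j}$ iff $i+j=t$, which is impossible when both indices lie strictly below $t/2$.

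The step I expect to be the main obstacle is $\Phi_{r,0}\not\cong\Phi_{r,0}'$ (and likewise $\Phi_{r,t/2}\not\cong\Phi_{r,t/2}'$), since these two agree on restriction to $C_n$ and, when $r$ is even, even have the same trace on $b$. I would handle it directly: an isomorphism would be an invertible matrix $P$ with $PA_{r,0}=A_{r,0}P$ and $PT=-TP$. Since $A_{r,0}$ is a single unipotent Jordan block, its centraliser is the algebra of upper-triangular Toeplitz matrices, so $P$ is upper triangular; as $T$ is also upper triangular (from the proof of Lemma \ref{lem1}), the $(1,1)$-entries of $PT$ and $TP$ both reduce to $P_{11}T_{11}$, and $PT=-TP$ therefore forces $2P_{11}T_{11}=0$. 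Because $T_{11}=(-1)^{r-1}\ne 0$ and $\mathrm{char}\,k$ is odd, one concludes $P_{11}=0$, contradicting invertibility of $P$. Replacing $T$ by $T_1$ disposes of the $t/2$ pair identically, completing the verification.
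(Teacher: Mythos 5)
Your proposal is correct and follows essentially the same route as the paper, which obtains this theorem as a summary of the preceding results: indecomposability from Theorems \ref{induced} and \ref{Theven}, the identifications $\Omega_{2r,i}\cong\Omega_{2r,t-i}$, and the count $\frac{t+3}{2}p^s$ (resp.\ $\frac{t+6}{2}p^s$) of indecomposables supplied by Berman's theorem. Your added verifications --- separating the families by the spectrum of $a$ (which also settles $\Omega_{2r,i}\not\cong\Omega_{2r,j}$ for distinct $i,j$ in the admitted range, a case Theorem \ref{odd} does not literally cover) and the centralizer/anticommutation argument showing $\Phi_{r,0}\not\cong\Phi_{r,0}'$ and $\Phi_{r,\frac{t}{2}}\not\cong\Phi_{r,\frac{t}{2}}'$ --- are accurate fillings of steps the paper leaves implicit.
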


\section{The indecomposable representations of $D(kD_n)$}

In this section we will classify all indecomposable
$D(kD_n)$-modules up to isomorphism by means of Yetter-Drinfeld
$kD_n$-modules. We will frequently use Theorem \ref{main1}, Theorem \ref{main2}
and Corollary \ref{main3}, but not explicitly mention them for
the sake of simplicity.

From Section 3, when $n=p^st$ is odd, the representatives of
conjugate classes of $D_n$ are $1, b$ and $a^i$ $(1\leqslant
i\leqslant \frac{n-1}{2})$. When $n=p^st$ is even, the
representatives of conjugate classes of $D_n$ are $1, b, ab$ and
$a^i$ $(1\leqslant i\leqslant \frac{n}{2})$.

Assume that $n=p^st$ is odd. Let $V_{r, 0}$, $V_{r, 0}'$ and $W_{2r,
i}$ be the indecomposable $kD_n$-modules corresponding to the
representations $\Phi_{r, 0}$, $\Phi_{r, 0}'$ and $\Omega_{2r, i}$,
respectively, where $1\leqslant r\leqslant p^s$ and $1\leqslant
i\leqslant \frac{t-1}{2}$. Since the centralizer $C_{D_n}(1)=D_n$,
$V_{r, 0}$, $V_{r, 0}'$ and $W_{2r, i}$ are non-isomorphic
indecomposable Yetter-Drinfeld $kD_n$-modules, by Theorem
\ref{main1}, Theorem \ref{main2} and Corollary \ref{main3}, with the
comodule structures given by $\varphi:V\rightarrow V\otimes kD_n,\
\varphi(v)=v\otimes 1,\ v\in V,$ where $V=V_{r, 0}$, $V_{r, 0}'$ or
$W_{2r, i}$.

$C_{D_n}(b)=\langle b\rangle$ is a cyclic group of order $2$. The
representatives set of left cosets of $C_{D_n}(b)$ in $D_n$ is
$C_n$. Let $U_1$ and $U_2$ be the $2$ simple 1-dimensional
$kC_{D_n}(b)$-modules given by $b\cdot u=(-1)^ju,\ u\in U_j,\
1\leqslant j\leqslant 2.$ Then the induced $kD_n$-modules
$D(U_1)=kD_n\otimes_{kC_{D_n}(b)}U_1$ and
$D(U_2)=kD_n\otimes_{kC_{D_n}(b)}U_2$ are non-isomorphic simple
Yetter-Drinfeld $kD_n$-modules of dimension $n$, with the comodule
structures given by $\varphi:D(U_j)\rightarrow D(U_j)\otimes kD_n,\
\varphi(a^i\otimes u)= (a^i\otimes u)\otimes a^{2i}b,$ where
$0\leqslant i \leqslant n-1$, $u\in U_j$.

$C_{D_n}(a^l)=C_n$ $(1\leqslant l\leqslant\frac{n-1}{2})$ is a cyclic group of
order $n$. The representatives set of left cosets of $C_{D_n}(a^l)$
in $D_n$ is $\{1, b\}$. Let $Q_{r, j}$ be the indecomposable
$kC_n$-module corresponding to the representation $\rho_{r, j}$
defined in Theorem \ref{prrep}, where $1\leqslant r\leqslant p^s$ and $0\leqslant
j\leqslant t-1$. For any $l$ with $1\leqslant l\leqslant\frac{n-1}{2}$, let $Q_{r,
j}^l=Q_{r, j}$ as $kC_n$-modules. Then the induced $kD_n$-modules
$D(Q_{r, j}^l)=kD_n\otimes_{kC_n}Q_{r, j}^l$ are non-isomorphic
indecomposable Yetter-Drinfeld $kD_n$-modules of dimension $2r$,
with the comodule structures given by
$$\varphi:D(Q_{r, j}^l)\rightarrow D(Q_{r, j}^l)\otimes kD_n, $$
$$\varphi(1\otimes
q)=(1\otimes q)\otimes a^l,\ \ \varphi(b\otimes q)=(b\otimes q)\otimes a^{n-l},$$
where $q\in Q_{r, j}^l$, $1\leqslant r\leqslant
p^s$, $0\leqslant j\leqslant t-1$ and $1\leqslant l\leqslant\frac{n-1}{2}$.

We summarize the above discussion as follows.

\begin{theo}
Assume that $n=p^st$ is odd. Then
$$\left\{\begin{array}{c}
V_{r, 0},\ V_{r, 0}',\ W_{2r, i},\\
D(U_1),\ D(U_2),\ D(Q_{r, j}^l)\\
\end{array}\left|\begin{array}{c}
1\leqslant r\leqslant p^s, 0\leqslant j\leqslant t-1,\\
1\leqslant i\leqslant\frac{t-1}{2}, 1\leqslant
l\leqslant\frac{n-1}{2}\\
\end{array}\right.\right\}$$ is a complete set of indecomposable
Yetter-Drinfeld $kD_n$-modules up to isomorphism.
\end{theo}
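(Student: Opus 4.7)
The plan is to invoke the bijective correspondence established by Theorem \ref{main1}, Theorem \ref{main2} and Corollary \ref{main3}: indecomposable Yetter-Drinfeld $kD_n$-modules, up to isomorphism, are in one-to-one correspondence with pairs $(C, N)$, where $C$ runs over the conjugate classes $\mathcal{K}(D_n)$ and, for the fixed representative $g_C\in C$, $N$ runs over the isomorphism classes of indecomposable $kC_{D_n}(g_C)$-modules. Thus I would enumerate $\mathcal{K}(D_n)$, compute each centralizer $C_{D_n}(g_C)$, classify its indecomposable modules using the results of Section 3, and then read off the corresponding induced Yetter-Drinfeld modules $D(N)$ from the discussion just preceding the statement.

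For $n$ odd, the conjugate class representatives, as recalled in Section 3, are $1,\ b$ and $a^l$ for $1\leqslant l\leqslant \frac{n-1}{2}$. For $g_C=1$ we have $C_{D_n}(1)=D_n$, so by the final theorem of Section 3 its indecomposable modules are exactly $V_{r,0}$, $V_{r,0}'$ and $W_{2r,i}$ with $1\leqslant r\leqslant p^s$ and $1\leqslant i\leqslant \frac{t-1}{2}$; these yield the Yetter-Drinfeld modules in the first row of the list (the comodule map being $v\mapsto v\otimes 1$ since $g_C=1$). For $g_C=b$ the centralizer $C_{D_n}(b)=\langle b\rangle$ has order $2$, which is coprime to $p$, hence $k\langle b\rangle$ is semisimple and admits exactly the two one-dimensional modules $U_1, U_2$; these produce $D(U_1), D(U_2)$. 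For $g_C=a^l$ with $1\leqslant l\leqslant \frac{n-1}{2}$ one checks that $C_{D_n}(a^l)=C_n$ (no reflection commutes with a nontrivial power of $a$ when $n$ is odd), and Theorem \ref{prrep} classifies the indecomposable $kC_n$-modules as $Q_{r,j}$ for $1\leqslant r\leqslant p^s$, $0\leqslant j\leqslant t-1$; these give the modules $D(Q_{r,j}^l)$.

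To finish I would argue that no two modules in the list are isomorphic and that the list is exhaustive. Exhaustiveness is immediate from Theorem \ref{main2}: every indecomposable Yetter-Drinfeld $kD_n$-module is of the form $D(M_{g_C})$ for some $C\in\mathcal{K}(D_n)$, and the above enumeration has already treated every conjugate class. For pairwise non-isomorphism, two modules $D(N_1)$ and $D(N_2)$ attached to different conjugate classes can never be isomorphic because their comodule structures are supported on disjoint conjugate classes of group-like elements, while within a single conjugate class $D(N_1)\cong D(N_2)$ forces $N_1\cong N_2$ by Corollary \ref{main3}, and the indecomposable modules over each centralizer were listed without repetition.

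The bookkeeping part (matching representatives, centralizers, and their indecomposable modules) is routine once Section 3 is in hand; the only mildly delicate point is verifying $C_{D_n}(a^l)=C_n$ for $1\leqslant l\leqslant \frac{n-1}{2}$ when $n$ is odd, and confirming that the comodule maps prescribed by the lemma preceding Theorem \ref{main1} agree with those recorded in the statement (in particular, that on the $b$-summand of $D(Q_{r,j}^l)$ one gets $a^{n-l}$, reflecting the conjugation $b a^l b^{-1}=a^{-l}$). This is what I would call the main, but still minor, obstacle; everything else reduces to invoking the previously established classification theorems.
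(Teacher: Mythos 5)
Your proposal is correct and follows essentially the same route as the paper: the paper's "proof" is precisely the preceding discussion that enumerates the conjugate class representatives $1$, $b$, $a^l$ $(1\leqslant l\leqslant\frac{n-1}{2})$, computes the centralizers $D_n$, $\langle b\rangle$, $C_n$, lists their indecomposable modules from Section 3, and then invokes Theorem \ref{main1}, Theorem \ref{main2} and Corollary \ref{main3} for exhaustiveness and pairwise non-isomorphism, exactly as you do. Your added checks (semisimplicity of $k\langle b\rangle$, $C_{D_n}(a^l)=C_n$ for $n$ odd, and the $a^{n-l}$ comodule value on the $b$-summand) are consistent with what the paper records.
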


Now assume that $n$ is even. Let $V_{r, 0}$, $V_{r, 0}'$, $V_{r,
\frac{t}{2}}$, $V_{r, \frac{t}{2}}'$ and $W_{2r, i}$ be the
indecomposable $kD_n$-modules corresponding to the representations
$\Phi_{r, 0}$, $\Phi_{r, 0}'$, $\Phi_{r, \frac{t}{2}}$, $\Phi_{r,
\frac{t}{2}}'$ and $\Omega_{2r, i}$ given in Theorem \ref{Theven},
respectively, where $1\leqslant r\leqslant p^s$ and $1\leqslant i\leqslant
\frac{t}{2}-1$. Then the $kD_n$-modules $V_{r, 0}$, $V_{r, 0}'$,
$V_{r, \frac{t}{2}}$, $V_{r, \frac{t}{2}}'$ and $W_{2r, i}$ are
non-isomorphic indecomposable Yetter-Drinfeld $kD_n$-modules with
the comodule structures given by
$$\varphi:V\rightarrow V\otimes kD_n,\  \varphi(v)=v\otimes 1,$$
where $V=V_{r, 0}$, $V_{r, 0}'$, $V_{r, \frac{t}{2}}$, $V_{r,
\frac{t}{2}}'$ or $W_{2r, i}$, and $1\leqslant r\leqslant p^s$, $1\leqslant
i\leqslant\frac{t}{2}-1$.

$C_{D_n}(b)=\langle b,a^{\frac{n}{2}}\rangle$ is a Klein four group
generated by $b$ and $a^{\frac{n}{2}}$.  The left coset
representatives of $C_{D_n}(b)$ in $D_n$ are $a^j$, $0\leqslant j\leqslant
\frac{n}{2}-1$. Let $V_1, V_2, V_3, V_4$ be the only $4$ simple
$kC_{D_n}(b)$-modules, which are all 1-dimensional and defined by
$$\left\{\begin{array}{l}
a^{\frac{n}{2}}\cdot v_1=v_1 ,\\
\ \ b\cdot v_1=v_1 ,\\
\end{array}
\right.\ \ \left\{\begin{array}{l}
a^{\frac{n}{2}}\cdot v_2=v_2 ,\\
\ \ b\cdot v_2=-v_2 ,\\
\end{array}
\right.\ \ \left\{\begin{array}{l}
a^{\frac{n}{2}}\cdot v_3=-v_3 ,\\
\ \ b\cdot v_3=v_3 ,\\
\end{array}
\right.\ \ \left\{\begin{array}{l}
a^{\frac{n}{2}}\cdot v_4=-v_4 ,\\
\ \ b\cdot v_4=-v_4 ,\\
\end{array}
\right.$$ where $v_i\in V_i$, $1\leqslant i\leqslant 4$. Then the
induced $kD_n$-modules $D(V_i)=kD_n\otimes_{C_{D_n}(b)}V_i$ are
non-isomorphic simple Yetter-Drinfeld $kD_n$-modules of dimension
$\frac{n}{2}$, with the comodule structures given by
$\varphi:D(V_i)\rightarrow D(V_i)\otimes kD_n,\ \varphi(a^j\otimes
v_i)=(a^j\otimes v_i)\otimes a^{2j}b,$ where $v_i\in V_i$,
$1\leqslant i\leqslant 4$ and $0\leqslant j\leqslant \frac{n}{2}-1$.

Similarly, $C_{D_n}(ab)=\langle ab, a^{\frac{n}{2}}\rangle$ is a
Klein four group generated by $ab$ and $a^{\frac{n}{2}}$. The left
coset representatives of $C_{D_n}(b)$ in $D_n$ are $a^j$, $0\leqslant
j\leqslant \frac{n}{2}-1$. Let $U_1, U_2, U_3, U_4$ be the only $4$
simple $kC_{D_n}(ab)$-modules, which are all 1-dimensional and
defined by
$$\left\{\begin{array}{l}
a^{\frac{n}{2}}\cdot u_1=u_1 ,\\
ab\cdot u_1=u_1 ,\\
\end{array}
\right.\ \ \left\{\begin{array}{l}
a^{\frac{n}{2}}\cdot u_2=u_2 ,\\
ab\cdot u_2=-u_2 ,\\
\end{array}
\right.\ \ \left\{\begin{array}{l}
a^{\frac{n}{2}}\cdot u_3=-u_3 ,\\
ab\cdot u_3=u_3 ,\\
\end{array}
\right.\ \ \left\{\begin{array}{l}
a^{\frac{n}{2}}\cdot u_4=-u_4 ,\\
ab\cdot u_4=-u_4 ,\\
\end{array}
\right.$$ where $u_i\in U_i$, $1\leqslant i\leqslant 4$. Then the
induced $kD_n$-modules $D(U_i)=kD_n\otimes_{C_{D_n}(ab)}U_i$ are
non-isomorphic simple Yetter-Drinfeld $kD_n$-modules of dimension
$\frac{n}{2}$, with the comodule structures given by
$\varphi:D(U_i)\rightarrow D(U_i)\otimes kD_n,\ \varphi(a^j\otimes
u_i)=(a^j\otimes u_i)\otimes a^{2j+1}b,$ where $u_i\in U_i$,
$1\leqslant i\leqslant 4$ and $0\leqslant j\leqslant \frac{n}{2}-1$.

$C_{D_n}(a^l)=C_n$ $(1\leqslant l\leqslant\frac{n}{2}-1)$ is a cyclic group of
order $n$. The left coset representatives of $C_{D_n}(a^l)$ in $D_n$
are $1, b$. Let $Q_{r, j}$ be the indecomposable $kC_n$-module
corresponding to the representation $\rho_{r, j}$ defined in Theorem
\ref{prrep} for any $1\leqslant r\leqslant p^s$ and $0\leqslant j\leqslant t-1$. For any
$l$ with $1\leqslant l\leqslant\frac{n}{2}-1$, let $Q_{r, j}^l=Q_{r, j}$ as
$kC_n$-modules. Then the induced $kD_n$-modules $D(Q_{r,
j}^l)=kD_n\otimes_{kC_n}Q_{r, j}^l$ are non-isomorphic
indecomposable Yetter-Drinfeld $kD_n$-modules of dimension $2r$,
with the comodule structures given by
$$\varphi:D(Q_{r, j}^l)\rightarrow D(Q_{r, j}^l)\otimes kD_n, $$
$$\varphi(1\otimes
q)=(1\otimes q)\otimes a^l,\ \ \varphi(b\otimes q)=(b\otimes q)\otimes a^{n-l},$$
where $q\in Q_{r, j}^l$, $1\leqslant r\leqslant
p^s$, $0\leqslant j\leqslant t-1$ and $1\leqslant l\leqslant\frac{n}{2}-1$.

$C_{D_n}(a^{\frac{n}{2}})=D_n$.  Let $V_{r, 0}, V_{r, 0}', V_{r,
\frac{t}{2}}, V_{r, \frac{t}{2}}'$ and $W_{2r, i}$ be the
indecomposable $kD_n$-modules corresponding to the representations
$\Phi_{r, 0}, \Phi_{r, 0}'$, $\Phi_{r, \frac{t}{2}}, \Phi_{r,
\frac{t}{2}}'$ and $\Omega_{2r, i}$, respectively, where $1\leqslant r\leqslant p^s$
and $1\leqslant i\leqslant \frac{t}{2}-1$. Then $D(V_{r, 0}),
D(V_{r, 0}')$,  $D(V_{r, \frac{t}{2}}), D(V_{r, \frac{t}{2}}')$ and
$D(W_{2r, i})$ are non-isomorphic indecomposable Yetter-Drinfeld
$kD_n$-modules, where $D(V_{r, 0})=V_{r, 0}$, $D(V_{r, 0}')=V_{r,
0}'$,  $D(V_{r, \frac{t}{2}})=V_{r, \frac{t}{2}}$,  $D(V_{r,
\frac{t}{2}}')=V_{r, \frac{t}{2}}'$ and $D(W_{2r, i})=W_{2r, i}$ as
$kD_n$-modules.  The comodule structures are given by
$$\varphi:D(V)\rightarrow D(V)\otimes kD_n,\ \varphi(v)=v\otimes a^{\frac{n}{2}},$$
for all $v\in V$,  where $V=V_{r, 0}, V_{r, 0}', V_{r,
\frac{t}{2}}, V_{r, \frac{t}{2}}'$ or $W_{2r, i}$, and $1\leqslant r\leqslant
p^s$, $1\leqslant i\leqslant \frac{t}{2}-1$.

We summarize the above discussion as follows.

\begin{theo}
Assume that $n=p^st$ is even. Then
$$\left\{\left.\begin{array}{c}
V_{r, 0},\ V_{r, 0}',\ V_{r, \frac{t}{2}},\ V_{r, \frac{t}{2}}',\ W_{2r,
i},\\
D(V_{r, 0}),\ D(V_{r, 0}'),\ D(V_{r, \frac{t}{2}}),\ D(V_{r,
\frac{t}{2}}'),\\
D(W_{2r, i}),\ D(V_m),\ D(U_m),\ D(Q_{r, j}^l)\\
\end{array}\right|\begin{array}{l}
1\leqslant r\leqslant p^s, 1\leqslant i\leqslant\frac{t}{2}-1,\\
0\leqslant j\leqslant t-1, 1\leqslant m \leqslant 4\\
1\leqslant l\leqslant\frac{n}{2}-1\\
\end{array}\right\}$$ is a complete set of indecomposable Yetter-Drinfeld
$kD_n$-modules up to isomorphism.
\end{theo}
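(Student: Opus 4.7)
The plan is to invoke the one-to-one correspondence of Theorem \ref{main1}, Theorem \ref{main2} and Corollary \ref{main3}, which reduces the classification to: (i) choosing a representative $g_C$ of each conjugate class $C$ of $D_n$, (ii) computing the centralizer $C_{D_n}(g_C)$, and (iii) listing all indecomposable $kC_{D_n}(g_C)$-modules. Each pair $(C,N)$ then gives an indecomposable Yetter-Drinfeld module $D(N)$, and different pairs yield non-isomorphic modules.

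First I would record, from the discussion preceding Theorem 3.3 (the even case), that a complete set of representatives of conjugate classes of $D_n$ is $\{1,\ b,\ ab,\ a^l\mid 1\leqslant l\leqslant \tfrac{n}{2}\}$, and compute the centralizers case by case: $C_{D_n}(1)=D_n$; $C_{D_n}(b)=\langle b,a^{n/2}\rangle\cong K_4$ (and symmetrically $C_{D_n}(ab)=\langle ab,a^{n/2}\rangle\cong K_4$, since $a^{n/2}$ is central and commutes with every reflection); $C_{D_n}(a^l)=C_n$ for $1\leqslant l\leqslant \tfrac{n}{2}-1$ (because $ba^l b^{-1}=a^{-l}\neq a^l$ in that range); and $C_{D_n}(a^{n/2})=D_n$, since $a^{n/2}$ is the unique central element of order $2$ in $C_n$.

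Next, for each centralizer I would invoke the classification already obtained in Section 3 to list all indecomposable modules: for the two copies of $K_4$, the algebra is semisimple and has exactly $4$ simples $V_1,\ldots,V_4$ (resp.\ $U_1,\ldots,U_4$) as constructed in the excerpt; for $C_n$, Theorem \ref{prrep} supplies the $pt$ indecomposables $Q_{r,j}$ with $1\leqslant r\leqslant p^s$, $0\leqslant j\leqslant t-1$; and for the two copies of $D_n$, the final theorem of Section 3 provides $\{V_{r,0},V_{r,0}',V_{r,\frac{t}{2}},V_{r,\frac{t}{2}}',W_{2r,i}\}$. Applying the functor $D(-)$ of the lemma just before Theorem \ref{main1}, together with the explicit coactions computed in the paragraphs preceding the statement, produces exactly the modules listed in the theorem, with the coaction determined by sending a tensor $g\otimes v$ to $(g\otimes v)\otimes g\,g_C g^{-1}$; this accounts for the $a^l$, $a^{2j}b$, $a^{2j+1}b$, and $a^{n/2}$ labels appearing above.

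Finally I would verify non-redundancy and completeness: Corollary \ref{main3} guarantees that within each conjugate class the induced modules are pairwise non-isomorphic, while Theorem \ref{main2} guarantees that an indecomposable Yetter-Drinfeld module is concentrated on a single conjugate class, so modules from distinct classes cannot be isomorphic. Thus the disjoint union over all representatives gives every indecomposable object of ${}_{kD_n}\mathcal{YD}^{kD_n}$ exactly once. The only mild obstacle is bookkeeping: one must check that the centralizer of $a^{n/2}$ really is all of $D_n$ (so the $D(V_{r,0}),D(V_{r,0}'),D(V_{r,\frac{t}{2}}),D(V_{r,\frac{t}{2}}'),D(W_{2r,i})$ appear as a \emph{second} family, distinguished from $V_{r,0},\ldots,W_{2r,i}$ only by their coaction $v\mapsto v\otimes a^{n/2}$ versus $v\mapsto v\otimes 1$), and that the range $1\leqslant l\leqslant \tfrac{n}{2}-1$ for the $Q_{r,j}^l$ is correct, the endpoints $l=0$ and $l=\tfrac{n}{2}$ having been absorbed into the two $D_n$-centralizer families. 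Once these identifications are in place, the theorem follows immediately.
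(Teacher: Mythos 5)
Your proposal is correct and follows essentially the same route as the paper: the theorem is precisely the summary of the case-by-case analysis over conjugacy class representatives $1, b, ab, a^l$ $(1\leqslant l\leqslant \frac{n}{2})$, computing each centralizer ($D_n$, $K_4$, $K_4$, $C_n$ for $l<\frac{n}{2}$, and $D_n$ again for $l=\frac{n}{2}$), inducing the indecomposables classified in Section 3, and invoking Theorem \ref{main1}, Theorem \ref{main2} and Corollary \ref{main3} for indecomposability, completeness and pairwise non-isomorphism. The only slip is cosmetic: $kC_n$ has $p^st$ (not $pt$) indecomposables, though your stated index ranges $1\leqslant r\leqslant p^s$, $0\leqslant j\leqslant t-1$ are correct.
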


Thus we have classified all finite dimensional indecomposable Yetter-Drinfeld $kD_n$-
modules up to isomorphism.


\begin{thebibliography}{[999]}
\bibitem{berman} S. D. Berman, On indecomposable modular
representations of a class of finite groups, Doke. Uzhgorod Univ.,
3(1960), 51-52.

\bibitem{conlon} S. Conlon,The modular representation algebra of
groups with Sylow $2$-subgroups $Z_2\times Z_2$, J. Austral. Math.
Soc. 6(1966), 76-88.

\bibitem{drinfeld} V. G. Drinfeld, Quantum groups, Proc. Int. Cong.
Math. Berkely(1986), 789-820.

\bibitem{Feit}W. Feit, The representation theory of finite
groups, North-Holland publishing company, Netherlands,1982.
\bibitem{frobenius} G. Frobenius,
\H{U}ber relationen zwischen den characteren einer gruppe und denen
ihrer un tergruppen, Sitzber. preuss. Akad. Wiss.(1898), 501-515.

\bibitem{higman} D. Higman, Indecomposable modules at
characteristic $p$, Duke. Math. J. 21(1954), 377-381.

\bibitem{huppert} B. Huppert and N. Blackburn, Finite group
II, Springer-Verlag, Berlin Heidelberg, 1982.

\bibitem{kassel} C. Kassel, Quantum groups, Springer-verlag, New
York, 1995.

\bibitem{majid} S. Majid, Doubles of quasitriangular Hopf
algebras, Comm. Algebra19(1991), 3061-3073.

\bibitem{mason} G. Mason, The quantum double of a finite group and
its role in conformal field theorey, \textit{in} "Proceedings,
London Mathematical Society Conference, Galway, 1993," Groups '93,
London Mathematical Society Lecture Note Series 212, 405-417,
Cambridge Univ. Press, Cambridge, 1995.


\bibitem{willems} W. Willems, Bemerkungen zur modularen
Darstellungstheorie $3$, Induzierte und eingeschr\"{a}nkte Moduln.
Arch. Math. (Basel)26(1976), 497-503.


\bibitem{witherspoon} S. J. Witherspoon, The representation ring of
the quantum double of a finite group, J. Algebra 17(1996), 305-329.
\end{thebibliography}
\end{document}